\definecolor{ltgrey}{RGB}{180, 187, 198}
\numberwithin{equation}{section}
\numberwithin{figure}{section}
\newtheorem{THM}{Theorem}
\newtheorem{PROP}[THM]{Proposition}
\newtheorem{COR}[THM]{Corollary}
\newtheorem{theorem}{Theorem}[section]
\newtheorem{lemma}[theorem]{Lemma}
\newtheorem{proposition}[theorem]{Proposition}
\newtheorem{corollary}[theorem]{Corollary}
\newtheorem{remark}[theorem]{Remark}
\newtheorem{example}[theorem]{Example}
\theoremstyle{definition}
\newtheorem{definition}[theorem]{Definition}
\newcommand{\C}{{\mathbb{C}}}
\newcommand{\N}{{\mathbb{N}}}
\renewcommand{\P}{{\mathbb{P}}}
\newcommand{\B}{\mathcal{B}}
\newcommand{\g}{\mathfrak{g}}
\renewcommand{\b}{\mathfrak{b}}
\def\br{\mathsf{Br}}
\def\Inv{\mathsf{I}\mathsf{n}\mathsf{v}}
\def\invv{\mathsf{i}\mathsf{n}\mathsf{v}}
\def\gg{{\mathfrak g}}
\def\bb{{\mathfrak b}}
\def\gl{{\mathfrak{gl}}}
\def\sl{{\mathfrak{sl}}}
\DeclareMathOperator{\Ima}{Im}
\definecolor{gold}{rgb}{0.85,.66,0}
\definecolor{cherry}{rgb}{0.9,.1,.2}
\definecolor{burgundy}{rgb}{0.8,.2,.2}
\definecolor{orangered}{rgb}{0.85,.3,0}
\definecolor{orange}{rgb}{0.85,.4,0}
\definecolor{olive}{rgb}{.45,.4,0}
\definecolor{lime}{rgb}{.6,.9,0}
\definecolor{green}{rgb}{.2,.7,0}
\definecolor{grey}{rgb}{.4,.4,.2}
\definecolor{brown}{rgb}{.4,.3,.1}
\newcommand{\fb}{\mathfrak{b}}
\def\inv{\mathsf{Inv}}
\def\poin{\mathsf{Poin}}
\def\ff{{\mathbb F}}
\def\sl{\mathfrak{sl}}
\def\m{{\mathbf m}}
\def\mm{\m_{\bf{max}}}
\def\NN{{\mathcal N}}
\def\lb{\left[}
\def\rb{\right]}
\def\zzz{{\mathbf z}}
\def\ii{{\text i}{\text n}_\preceq}
\def\iii{{\text i}{\text n}_{\preceq'}}
\def\br{\mathsf{br}}
\def\gl{\mathfrak{gl}}
\DeclareMathOperator{\nilp}{\mathsf{n}}
\DeclareMathOperator{\semi}{\mathsf{s}}
\begin{document}

\title[Hessenberg varieties of codimension one in the flag variety]{Hessenberg varieties of codimension one in the flag variety}

\author{Laura Escobar}
\address{Mathematics Department\\ University of California, Santa Cruz \\ 1156 High Street \\ Santa Cruz, California   95064 \\ U.S.A. }
\email{lauraescobar@ucsc.edu}

\author{Martha Precup}
\address{Department of Mathematics\\ Washington University in St. Louis \\ One Brookings Drive \\ St. Louis, Missouri  63130 \\ U.S.A. }
\email{martha.precup@wustl.edu}

\author{John Shareshian}
\address{Department of Mathematics\\ Washington University in St. Louis \\ One Brookings Drive \\ St. Louis, Missouri  63130 \\ U.S.A. }
\email{jshareshian@wustl.edu}

\begin{abstract}  We study geometric and topological properties of Hessenberg varieties of codimension one in the type A flag variety.  Our main results: (1) give a formula for the Poincar\'e polynomial, (2) characterize when these varieties are irreducible, and (3) show that all are reduced schemes.  We prove that the singular locus of any nilpotent codimension one Hessenberg variety is also a Hessenberg variety. A key tool in our analysis is a new result applying to all (type A) Hessenberg varieties without any restriction on codimension, which states that their Poincar\'e polynomials can be computed by counting the points in the corresponding variety defined over a finite field. 
The results below were originally motivated by work of the authors in~[MR4960071] studying the precise relationship between Hessenberg and Schubert varieties, and we obtain a corollary extending the results from that paper to all codimension one (type A) Schubert varieties.
\end{abstract}

\maketitle

\section{Introduction}

We consider Hessenberg varieties of type A.  
Given a positive integer $n$, let $G=GL_n(\C)$ and $B \subseteq G$ the Borel subgroup of upper triangular matrices.  Let $\g=\mathfrak{gl}_n(\C)$ and $\b$ be the respective Lie algebras of $G$ and $B$.  We consider the action of $G$ on $\g$ by conjugation.  Given $\mathsf{x} \in \g$ and a $B$-invariant subspace $H \subseteq \g$, the type A \textit{Hessenberg variety} $\B(\mathsf{x},H)$ consists of all $gB$ in the flag variety $\B:=G/B$ satisfying $g^{-1}\mathsf{x}g \in H$.  

We assume throughout that $H$ contains the Borel subalgebra $\b$, as was assumed by De Mari and Shayman in \cite{DS1988} and by De Mari, Procesi and Shayman in \cite{DPS1992}, seminal works on Hessenberg varieties.  Under this assumption, there is a unique \textit{Hessenberg vector} $\m=(\m(1),\ldots,\m(n))$, that is, a weakly increasing sequence from $[n]$ satisfying $\m(i) \geq i$ for all $i \in [n]$, such that $H$ consists of all matrices $\mathsf{a} = (\mathsf{a}_{ij}) \in \g$ such that $\mathsf{a}_{ij}=0$ if $i>\m(j)$.  We write $H=H(\m)$ in this case.    We omit the assumption from \cite{DS1988,DPS1992} that $\mathsf{x}$ is regular and semisimple.  

Of particular interest herein is the case where $H=H(\mm)$ with
$$
\mm:=(n-1,n,\ldots,n).
$$
Notice that $H(\mm)$ is the unique maximal proper $B$-invariant subspace of $\g$, hence our choice of notation.
We observe that if $\mathsf{x}$ is not scalar, then $\B(\mathsf{x},H(\mm))$ has codimension one in $\B$.  Conversely, we showed in \cite{EPS} that if $\B(\mathsf{x},H)$ has codimension one in $\B$, then $H=H(\mm)$ or there is some $\lambda \in \C$ such that $\mathsf{x}-\lambda I_n$ has rank one.  We continue our study of codimension one Hessenberg varieties in this manuscript, concentrating on $\B(\mathsf{x},H(\mm))$.  

One of our main tools is a result that applies with no condition on codimension and could be of use in other settings. 
We define as usual the \textit{Poincar\'e polynomial} of $\B(\mathsf{x}, H)$ by
$$
\poin(\B(\mathsf{x},H);q):=\sum_j \dim_\C(H^j(\B(\mathsf{x},H);\C))q^j.
$$
Given a Hessenberg vector $\m$, the variety $\B(\mathsf{x},H(\m))$ admits an affine paving, as shown by Tymoczko in \cite{Tymoczko2006}.  It follows that the (singular) cohomology of $\B(\mathsf{x},H(\m))$ is concentrated in even degrees.  So  $\poin(\B(\mathsf{x},H(\m));q^{1/2})$ is a polynomial in $q$.  It follows from the particulars of Tymoczko's result that, given any $\mathsf{x}$ and $H(\m)$ as above, there exists an upper triangular integer matrix $\mathsf{x}^\prime$ such that 
$$
\poin(\B(\mathsf{x},H(\m));q)=\poin(\B(\mathsf{x}^\prime,H(\m));q).
$$
We may assume that $\mathsf{x}^\prime$ is in what Tymoczko calls highest form and permuted Jordan form, which will be defined later.  In addition, we may assume that the eigenvalues of x are integers.  In particular, $\mathsf{x}^\prime$ is the sum of a diagonal integer matrix $\semi$ and a strictly upper triangular $0-1$ matrix $\nilp$.  We write $m_{\semi}$ for the largest absolute value of an entry of $\semi$.  Given any prime $p$, let $\bar{\mathsf{x}}^\prime \in \gl_n(\ff_p)$ be the matrix obtained from $\mathsf{x}^\prime$ by reducing each entry modulo $p$.  Let $\bar{G}=GL_n(\ff_p)$ and let $\bar{B}$ be the subgroup of $\bar{G}$ consisting of upper triangular matrices.  The Hessenberg vector $\m$ determines a subspace $\bar{H}(\m)$ of the Lie algebra $\gl_n(\ff_p)$ in the same way it determines $H(\m)$.  We define
$$
\B_p(\bar{\mathsf{x}}^\prime ,\bar{H}(\m)):=\{g\bar{B} \in \bar{G}/\bar{B}\mid g^{-1}\bar{\mathsf{x}}^\prime g \in \bar{H}(\m)\}.
$$ 

\begin{THM}[The point count heuristic] \label{qcount}
If $p>2m_{\semi}$, then
\begin{equation} \label{qcounteq}
\left|\B_p(\bar{\mathsf{x}}^\prime ,\bar{H}(\m))\right|=\poin(\B(\mathsf{x}^\prime,H(\m));p^{1/2}).
\end{equation}
\end{THM}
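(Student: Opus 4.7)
The plan is to adapt Tymoczko's construction of an affine paving of $\B(\mathsf{x}^\prime, H(\m))$ to the setting of $\ff_p$ and to count points cell-by-cell. Since a complex variety with an affine paving has cohomology that is free and concentrated in even degrees, Tymoczko's paving gives
$$\poin(\B(\mathsf{x}^\prime, H(\m)); q) = \sum_w q^{2 d_w},$$
where the sum runs over the indexing set of the paving and $d_w$ is the complex dimension of the $w$-th cell. Thus (\ref{qcounteq}) would follow at once from a partition of $\B_p(\bar{\mathsf{x}}^\prime, \bar{H}(\m))$ into affine pieces of cardinality $p^{d_w}$, indexed by the same set.

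When $\mathsf{x}^\prime = \semi + \nilp$ is in highest form and permuted Jordan form, Tymoczko builds each cell by intersecting $\B(\mathsf{x}^\prime, H(\m))$ with the Schubert cell associated to $w$ in a suitable basis, then parameterizing the resulting solution set of the system $g^{-1}\mathsf{x}^\prime g \in H(\m)$ using a triangular elimination in the entries of $g$. The only arithmetic obstruction to running this procedure over an arbitrary field is the need to divide by the nonzero pairwise differences of diagonal entries of $\semi$. Because these entries are integers with absolute value at most $m_\semi$, all such differences are integers of absolute value at most $2m_\semi$; the hypothesis $p > 2m_\semi$ therefore guarantees that diagonal entries of $\semi$ coincide in $\ff_p$ exactly when they coincide in $\Z$, and that every denominator appearing in the elimination is invertible modulo $p$. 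In particular, $\bar{\mathsf{x}}^\prime$ is in highest form and permuted Jordan form over $\ff_p$ with the same indexing data as $\mathsf{x}^\prime$ has over $\C$.

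With these observations, the same construction yields a partition of $\B_p(\bar{\mathsf{x}}^\prime, \bar{H}(\m))$ into locally closed affine subsets $\bar{C}_w$, each isomorphic to $\ff_p^{d_w}$, one for each cell of the complex paving. Summing cardinalities gives
$$\left|\B_p(\bar{\mathsf{x}}^\prime, \bar{H}(\m))\right| = \sum_w p^{d_w} = \poin(\B(\mathsf{x}^\prime, H(\m)); p^{1/2}),$$
which is (\ref{qcounteq}). The main obstacle is checking that Tymoczko's inductive argument from \cite{Tymoczko2006} transports verbatim to $\ff_p$: one must track each algebraic manipulation, identify which denominators appear, and verify that they are all integers whose nonzero prime factors are at most $2m_\semi$, so that the stated bound on $p$ suffices to make every step work in characteristic $p$. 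A clean way to organize this is to realize the whole construction as a scheme over $\Z[1/N]$ for $N$ the product of nonzero differences of diagonal entries of $\semi$, and then to base change to $\ff_p$ for any $p$ not dividing $N$.
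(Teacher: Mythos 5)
Your proposal is correct and takes essentially the same approach as the paper: the authors also adapt Tymoczko's cell-by-cell construction to $\ff_p$, proving a finite-field version of her fiber-counting lemma in which each fiber of the triangular elimination has cardinality $p^{d_i}$, so that $|C_{w,p}\cap \B_p(\bar{\mathsf{x}}^\prime,\bar{H}(\m))|=p^{d_{w,\mathsf{x}^\prime}^{\m}}$ for each nonempty cell. The hypothesis $p>2m_{\semi}$ is used exactly as you say, to guarantee that distinct diagonal entries of $\semi$ remain distinct (hence their differences invertible) modulo $p$; the only other divisions occurring are by pivot entries of the conjugated nilpotent part, which are automatically nonzero over any field.
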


We prove Theorem~\ref{qcount} in Section \ref{qcproof.sec} through close examination of Tymoczko's affine paving construction.  The point count heuristic allows us to compute the Poincar\'e polynomial of $\B(\mathsf{x}, H(\m))$ using elementary methods. For example, we obtain a simple formula for the Poincar\'e polynomial of $\B(\mathsf{x},H(\mm))$ for arbitrary $\mathsf{x}$.   It is not hard to enumerate $\B_p(\mathsf{y},\bar{H}(\mm))$ for arbitrary $\mathsf{y} \in \gl_n(\ff_p)$.    The following result appears in Section \ref{counting.sec} below.

\begin{PROP} \label{eigenspaces}
If $\mathsf{y} \in \gl_n(\ff_p)$ fixes exactly $k$ $1$-dimensional subspaces of $\ff_p^n$, then
$$
\left|\B_p(\mathsf{y},\bar{H}(\mm))\right|=[n-2]_p!\left([n]_p[n-2]_p+kp^{n-2}\right).
$$
\end{PROP}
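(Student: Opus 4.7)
The plan is to reformulate the defining condition of $\B_p(\mathsf{y},\bar H(\mm))$ as a geometric constraint on the associated flag, and then to enumerate flags by choosing, in order, $F_1$, then $F_{n-1}$, and finally the intermediate subspaces.

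First I would unpack the Hessenberg condition. Because $\mm = (n-1,n,\ldots,n)$, the space $\bar H(\mm)$ is precisely the hyperplane of matrices in $\gl_n(\ff_p)$ whose $(n,1)$-entry is zero. Writing $F_i := \mathrm{span}(ge_1,\ldots,ge_i)$, the condition $(g^{-1}\mathsf{y}g)_{n,1}=0$ says that $\mathsf{y}\cdot ge_1$ lies in $\mathrm{span}(ge_1,\ldots,ge_{n-1})$, which is equivalent to $\mathsf{y}(F_1) \subseteq F_{n-1}$. So the variety becomes the set of complete flags $F_\bullet$ with $\mathsf{y}(F_1) \subseteq F_{n-1}$.

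Next, I would count pairs $(F_1, F_{n-1})$ satisfying this, splitting according to whether $F_1$ is $\mathsf{y}$-invariant. For each of the $k$ invariant lines, the requirement reduces to the automatic inclusion $F_1 \subseteq F_{n-1}$, so $F_{n-1}$ can be any of the $[n-1]_p$ hyperplanes in $\ff_p^n$ containing $F_1$ (equivalently, any hyperplane in $\ff_p^n/F_1$). For each of the remaining $[n]_p - k$ lines, the sum $F_1 + \mathsf{y}(F_1)$ is $2$-dimensional and must lie in $F_{n-1}$, leaving $[n-2]_p$ choices of $F_{n-1}$. Once $F_1 \subseteq F_{n-1}$ is fixed, completing to a full flag is the same as choosing a complete flag in $F_{n-1}/F_1 \cong \ff_p^{n-2}$, which contributes a factor of $[n-2]_p!$.

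Assembling these counts yields
$$
|\B_p(\mathsf{y},\bar H(\mm))| = [n-2]_p!\bigl(k[n-1]_p + ([n]_p - k)[n-2]_p\bigr),
$$
and the identity $[n-1]_p - [n-2]_p = p^{n-2}$ rearranges this into the stated formula. There is no real obstacle: the only point that requires care is recognizing that the Hessenberg condition for $\mm$ involves only the two pieces $F_1$ and $F_{n-1}$ of the flag, after which the count is elementary linear algebra over $\ff_p$.
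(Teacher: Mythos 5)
Your proof is correct and follows essentially the same route as the paper: both reformulate the condition as $\mathsf{y}V_1 \subseteq V_{n-1}$, split the count according to whether $V_1$ is $\mathsf{y}$-invariant, count the choices of $V_{n-1}$ ($[n-1]_p$ versus $[n-2]_p$), multiply by $[n-2]_p!$ for the intermediate subspaces, and finish with the identity $[n-1]_p-[n-2]_p=p^{n-2}$. No discrepancies to report.
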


The next result follows quickly from Theorem \ref{qcount} and Proposition \ref{eigenspaces}.

\begin{THM} \label{poinpoly}
If $\mathsf{x} \in \g$ has exactly $\ell$ pairwise distinct eigenvalues $\lambda_1,\ldots \lambda_\ell$ and $\dim_\C\ker(\mathsf{x}-\lambda_jI_n)=d_j$ for each $j \in [\ell]$, then the Poincar\'e polynomial of $\B(\mathsf{x},H(\mm))$ is
\begin{equation} \label{poinpolyform}
\poin(\B(\mathsf{x},H(\mm));q)=[n-2]_{q^2}!\left([n]_{q^2}[n-2]_{q^2}+q^{2n-4}\sum_{j=1}^\ell [d_j]_{q^2}\right)
\end{equation}
\end{THM}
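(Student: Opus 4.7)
The plan is to combine Theorem~\ref{qcount} with Proposition~\ref{eigenspaces} and extract the Poincar\'e polynomial via a polynomial identity argument.

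First, I would invoke the reduction described immediately before Theorem~\ref{qcount}: replace $\mathsf{x}$ with the upper triangular integer matrix $\mathsf{x}'=\semi+\nilp$ that is in highest form and permuted Jordan form and shares the same Poincar\'e polynomial. Since $\B(\mathsf{x},H(\mm))$ depends on $\mathsf{x}$ only through its $G$-conjugacy class (left multiplication by a conjugating element is an isomorphism between the two Hessenberg varieties), I can arrange $\mathsf{x}'$ to have the same Jordan type as $\mathsf{x}$. In particular, the eigenvalues of $\mathsf{x}'$ are integers $\lambda_1,\ldots,\lambda_\ell$ with the same geometric multiplicities $d_1,\ldots,d_\ell$ as for $\mathsf{x}$.

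Next, I would fix a prime $p>2m_\semi$ and pass to the reduction $\bar{\mathsf{x}}'\in \gl_n(\ff_p)$. Because $|\lambda_j|\leq m_\semi$ for every $j$ and $p>2m_\semi$, the reductions $\bar{\lambda}_j \in \ff_p$ remain pairwise distinct. Since $\mathsf{x}'$ is in permuted Jordan form with off-diagonal $0/1$ entries, its reduction retains the same block structure over $\ff_p$, so the geometric multiplicity of $\bar{\lambda}_j$ as an eigenvalue of $\bar{\mathsf{x}}'$ is still $d_j$. Now I would count the $1$-dimensional $\bar{\mathsf{x}}'$-fixed subspaces of $\ff_p^n$: each such subspace lies in a unique eigenspace $\ker(\bar{\mathsf{x}}'-\bar{\lambda}_j I_n)$, and an eigenspace of dimension $d_j$ over $\ff_p$ contains $\frac{p^{d_j}-1}{p-1}=[d_j]_p$ lines. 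Hence
\[
k \;=\; \sum_{j=1}^\ell [d_j]_p,
\]
and Proposition~\ref{eigenspaces} yields
\[
|\B_p(\bar{\mathsf{x}}',\bar{H}(\mm))| \;=\; [n-2]_p!\left([n]_p[n-2]_p + p^{n-2}\sum_{j=1}^\ell [d_j]_p\right).
\]

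Finally, Theorem~\ref{qcount} identifies the left-hand side with $\poin(\B(\mathsf{x},H(\mm));p^{1/2})$ for every prime $p>2m_\semi$. Both sides, viewed as functions of $p$, are polynomials (the left via the substitution $q=p^{1/2}$, since $\poin(\B(\mathsf{x},H(\mm));q^{1/2})\in\Z[q]$), and they agree at infinitely many integer values, so they are equal as polynomials in $p$. Replacing $p$ with $q^2$ produces the stated formula.

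The only subtle point is the verification, in the second step, that reduction modulo a sufficiently large prime preserves the geometric multiplicities $d_j$, but this is straightforward given that $\mathsf{x}'$ is already in (permuted) Jordan form and its eigenvalues are bounded. I therefore anticipate no essential difficulty beyond carefully invoking the two inputs and the polynomial identity argument.
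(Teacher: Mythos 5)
Your argument is correct and follows essentially the same route as the paper: reduce to an integer matrix in HFPJF with the same Poincar\'e polynomial (note that this step rests on Corollary~\ref{cor.bettinumber}, i.e.\ the type-dependence of Tymoczko's dimension formula, and not on conjugation-invariance alone, since replacing eigenvalues by integers is not a conjugation), count the fixed lines eigenspace-by-eigenspace to get $k=\sum_{j=1}^\ell [d_j]_p$, and conclude from Theorem~\ref{qcount}, Proposition~\ref{eigenspaces}, and the fact that two polynomials agreeing at infinitely many primes coincide. The paper additionally normalizes the eigenvalues to $1,\ldots,\ell$ so that any $p>2\ell$ works, but this difference is cosmetic.
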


Our proof of \eqref{poinpolyform} using the point count heuristic is much simpler than applying previously known Betti number formulas from, e.g.,~\cite{Tymoczko2006, Precup2013}, which each require multiple cases.
Theorem \ref{qcount} (through Theorem \ref{poinpoly}) is also helpful in understanding the geometry of $\B(\mathsf{x},H(\mm))$.  We obtain the following result in Section~\ref{reducible.sec}.

\begin{THM} \label{irreducible}
For $\mathsf{x} \in \g$, the variety $\B(\mathsf{x},H(\mm))$ is reducible if and only if there is some $\lambda \in \C$ such that $\mathsf{x}-\lambda I_n$ has rank one.
\end{THM}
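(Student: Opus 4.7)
The plan is to verify each direction separately. For the forward direction, suppose $\mathsf{x} - \lambda I_n = u w^\top$ has rank one, with $u, w \in \C^n$ nonzero. Then
\[
(g^{-1}\mathsf{x} g)_{n,1} \;=\; (g^{-1}u)_n \cdot (w^\top g)_1,
\]
so the defining condition of $\B(\mathsf{x},H(\mm))$ is equivalent to ``$(w^\top g)_1 = 0$ or $(g^{-1}u)_n = 0$''. Writing $V_\bullet$ for the flag corresponding to $gB$, this says $V_1 \subseteq w^\perp$ or $u \in V_{n-1}$, and hence $\B(\mathsf{x},H(\mm)) = X_1 \cup X_2$, where $X_1 := \{V_\bullet : V_1 \subseteq w^\perp\}$ and $X_2 := \{V_\bullet : u \in V_{n-1}\}$ are irreducible subvarieties of codimension one in $\B$. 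For $n \geq 3$ these are distinct, exhibiting reducibility.

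For the backward direction, assume $d_j := \dim_\C \ker(\mathsf{x} - \lambda_j I_n) \leq n-2$ for every eigenvalue $\lambda_j$ of $\mathsf{x}$. The scalar case is trivial (the variety is all of $\B$), so assume $\mathsf{x}$ is non-scalar; then $\B(\mathsf{x},H(\mm))$ is the zero locus of a nonzero global section of a line bundle on $\B$ and therefore has pure codimension one. Combined with Tymoczko's affine paving (already invoked in the introduction), this forces the number of irreducible components to equal the leading coefficient of $\poin(\B(\mathsf{x},H(\mm));q)$ at the top degree $2\bigl(\binom{n}{2}-1\bigr) = n^2-n-2$: in an affine paving of a pure-dimensional variety, each top-dimensional irreducible component contains a unique top-dimensional cell.

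It then remains to show this leading coefficient equals $1$. Applying Theorem~\ref{poinpoly}, the first summand $[n-2]_{q^2}! \cdot [n]_{q^2}[n-2]_{q^2}$ has leading term $q^{(n-2)(n-3)+(2n-2)+(2n-6)} = q^{n^2-n-2}$ with coefficient $1$, while the second summand $[n-2]_{q^2}! \cdot q^{2n-4}\sum_j [d_j]_{q^2}$ has leading degree
\[
(n-2)(n-3) + (2n-4) + 2(d_{\max}-1) \;=\; n^2-3n+2d_{\max}.
\]
The hypothesis $d_{\max} \leq n-2$ forces $n^2-3n+2d_{\max} \leq n^2-n-4 < n^2-n-2$, so the two contributions do not interfere and the overall leading coefficient is $1$.

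\textit{Main obstacle.} The degree bookkeeping is routine; the delicate step is justifying ``top Betti number equals the number of top-dimensional irreducible components'' in this possibly singular setting. This follows from the pure-dimensional hypersurface structure together with the affine paving, but deserves a careful write-up. The case $n=2$ needs separate verification, since $X_1$ and $X_2$ may coincide (e.g.\ for a regular nilpotent $2 \times 2$ matrix, where $w^\top u = 0$).
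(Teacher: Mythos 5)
Your proposal is correct, and the two directions relate to the paper's argument differently. For the backward direction you follow essentially the same route as the paper: establish that $\B(\mathsf{x},H(\mm))$ is equidimensional of codimension one, observe that the top Betti number then counts irreducible components, and read off that coefficient from Theorem~\ref{poinpoly}; your degree bookkeeping matches the paper's computation (the paper phrases the dichotomy as ``monic versus not monic''). The only cosmetic difference is how equidimensionality is justified: the paper covers $\B(\mathsf{x},H(\mm))$ by the patches $\NN_{\dot w,\mathsf{x}}^{\mm}$, notes each is a hypersurface in $\C^{\binom{n}{2}}$ and hence Cohen--Macaulay, and combines this with connectedness (constant term $1$ of the Poincar\'e polynomial), whereas you invoke the line-bundle-section description directly; both are valid. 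Your forward direction, however, is genuinely different and arguably more informative: the paper deduces reducibility in the rank-one case from the same ``leading coefficient $\geq 2$'' criterion, while you exhibit the two components explicitly as $X_1=\{V_\bullet : V_1\subseteq w^\perp\}$ and $X_2=\{V_\bullet : u\in V_{n-1}\}$, which identifies the components geometrically (they are translates of the codimension-one Schubert varieties $X_{s_1w_0}$ and $X_{s_{n-1}w_0}$) rather than merely counting them. This buys extra information at no real cost, and it bypasses Theorem~\ref{poinpoly} entirely for that implication.

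Two small points. First, you are right to flag $n=2$: there the theorem as stated fails for a regular nilpotent ($\B(\mathsf{x},H(\mm))$ is a single point, hence irreducible, though $\mathsf{x}$ has rank one), and the paper's own criterion also silently breaks down in that case (with $n=2$, $d_1=1$ one has $\max_j(d_j-1)=n-2$ yet the Poincar\'e polynomial is the constant $1$); the result should be read with $n\geq 3$, as in Theorem~\ref{reduced}. Second, your ``each top-dimensional irreducible component contains a unique top-dimensional cell'' step is the standard Borel--Moore homology argument and is fine, but note it is needed only for the count of components being \emph{at least} the number of top cells is not the relevant direction here --- what you use is that the top Betti number equals the number of top-dimensional components of a projective variety, which holds for any equidimensional projective variety independently of the paving; the paving is only needed to identify that Betti number with the coefficient computed in Theorem~\ref{poinpoly}.
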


It follows immediately from Theorem~\ref{irreducible} that $\B(\mathsf{x},H(\mm))$ is reducible if and only if $\mathsf{x}$ is nilpotent with Jordan decomposition corresponding to the partition $(2,1^{n-2})$ or $\mathsf{x}$ is semisimple and conjugate to a diagonal matrix $\mathrm{diag}(c_1, \ldots, c_1, c_2)$ such that $c_1, c_2\in \C$ with $c_1\neq c_2$.

The proof of Theorem \ref{irreducible} relies on Theorem \ref{qcount} and the examination of affine patches.  Let $B_-$ be the Borel subgroup consisting of all lower triangular matrices in $G$ and let $U_-\simeq \C^{{n}\choose{2}}$ be the unipotent radical of $B_-$, consisting of all matrices $I+\nilp$ with $\nilp$ strictly lower triangular.  The \textit{patch} in $\B$ centered at a point $gB$ is $\NN_g:=gB_-B/B$.  Each coset in this patch has a unique representative of the form $guB$ with $u \in U_-$.  It follows that $\NN_g$ is open in $\B$ and isomorphic to affine space $\C^{{n} \choose {2}}$.  Now for $gB \in \B(\mathsf{x},H(\mm))$, we see that $\NN_{g,\mathsf{x}}^{\mm}:=\NN_g \cap \B(\mathsf{x},H(\mm))$ is an open neighborhood of $gB$ in $\B(\mathsf{x},H(\mm))$ that is isomorphic to an affine hypersurface in $U_-$ cut out by a certain determinant.  Using this fact, we will show that $\B(\mathsf{x},H(\mm))$ is equidimensional.  This allows us to determine whether or not $\B(\mathsf{x},H(\mm))$ is irreducible by calculating its top Betti number, which Theorem \ref{poinpoly} renders easy.

Examination of patches enables us to determine the singular locus of $\B(\mathsf{x},H(\mm))$ when $\mathsf{x}$ is nilpotent.  Indeed, the point $gB$ is smooth in $\B(\mathsf{x},H(\mm))$ if and only if it is smooth in $\NN_{g,\mathsf{x}}^{\mm}$.  Direct calculation yields the next result, which we prove in Section \ref{singular.sec}.  We remark that this method was used to determine the singular loci of the members of another class of Hessenberg varieties, the Petersen varieties, by Insko and Yong in \cite{Insko-Yong2012}. 
These patches were also used by Abe, Dedieu, Galetto and Harada in \cite{ADGH}, by Abe, Fujita and Zeng in \cite{AFZ}, and by Da Silva and Harada in \cite{DH}.

\begin{THM} \label{nilpotent}
The singular locus of $\B(\mathsf{x},H(\mm))$ is contained in $\B(\mathsf{x},H((1,n-1,\ldots,n-1,n)))$. If $\mathsf{x}\in\g$ is nilpotent, then this containment is an equality.
\end{THM}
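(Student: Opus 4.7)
The plan is to work in the affine patch $\NN_{g,\mathsf{x}}^{\mm}$ around any $gB\in\B(\mathsf{x},H(\mm))$. Set $y:=g^{-1}\mathsf{x}g$ and parameterize $\NN_g$ by $u=I+\eta$, where $\eta$ ranges over strictly lower triangular matrices, giving coordinates $\eta_{i,j}$ (for $i>j$) on $\NN_g\cong\C^{\binom{n}{2}}$. The space $H(\mm)$ is the kernel of the single linear form $\mathsf{a}\mapsto\mathsf{a}_{n,1}$, so $\NN_{g,\mathsf{x}}^{\mm}$ is the hypersurface in $\NN_g$ cut out by
$$f(\eta):=(u^{-1}yu)_{n,1}.$$
Since $\B(\mathsf{x},H(\mm))$ is reduced by an earlier main result of the paper, $gB$ is singular in $\B(\mathsf{x},H(\mm))$ if and only if every first-order partial derivative of $f$ vanishes at $\eta=0$.

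To compute the gradient, I would linearize: since $\eta$ is nilpotent, $u^{-1}=I-\eta+O(\eta^2)$, and hence $u^{-1}yu=y+[y,\eta]+O(\eta^2)$. Direct matrix multiplication gives
$$[y,\eta]_{n,1} \;=\; \sum_{k=2}^{n} y_{n,k}\,\eta_{k,1} \;-\; \sum_{k=1}^{n-1} y_{k,1}\,\eta_{n,k},$$
where the ranges reflect that $\eta_{k,1}$ is a coordinate only for $k\geq 2$ and $\eta_{n,k}$ only for $k\leq n-1$. Collecting coefficients (taking care that $\eta_{n,1}$ appears in both sums), the nonzero partials at the origin are $\partial f/\partial \eta_{k,1}=y_{n,k}$ and $\partial f/\partial \eta_{n,k}=-y_{k,1}$ for $2\leq k\leq n-1$, together with $\partial f/\partial \eta_{n,1}=y_{n,n}-y_{1,1}$. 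Therefore $gB$ is singular in $\B(\mathsf{x},H(\mm))$ exactly when $y_{k,1}=0$ for $2\leq k\leq n-1$, $y_{n,k}=0$ for $2\leq k\leq n-1$, and $y_{n,n}=y_{1,1}$. Together with the automatic equality $y_{n,1}=0$, the first two families of conditions are precisely the membership requirements for $y\in H((1,n-1,\ldots,n-1,n))$, so the singular locus is contained in $\B(\mathsf{x},H((1,n-1,\ldots,n-1,n)))$, establishing the first assertion.

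For the nilpotent case, suppose $\mathsf{x}$ (and hence $y$) is nilpotent and $gB\in\B(\mathsf{x},H((1,n-1,\ldots,n-1,n)))$. The vanishings above give $y$ the block-upper-triangular form
$$y \;=\; \begin{pmatrix} y_{1,1} & * & * \\ 0 & A & * \\ 0 & 0 & y_{n,n} \end{pmatrix}$$
with diagonal blocks of sizes $1$, $n-2$, $1$, so its characteristic polynomial factors as $(t-y_{1,1})(t-y_{n,n})\chi_A(t)$. Nilpotency forces this polynomial to equal $t^n$, yielding $y_{1,1}=y_{n,n}=0$, so the extra condition $y_{n,n}=y_{1,1}$ holds automatically and $gB$ is singular. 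I expect the main subtlety of the argument to be the appearance of this extra diagonal condition in the gradient: because $\eta_{n,1}$ lies simultaneously in the first column and the last row, it contributes to both commutator sums and produces the term $y_{n,n}-y_{1,1}$, which is exactly why the singular locus is strictly smaller than $\B(\mathsf{x},H((1,n-1,\ldots,n-1,n)))$ in general but collapses onto it in the nilpotent setting.
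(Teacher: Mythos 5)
Your proof is correct, and it follows the same basic strategy as the paper: work in the affine patch $gU_-B/B$ and apply the Jacobian criterion to a single local defining equation of $\NN_{g,\mathsf{x}}^{\mm}$. Indeed, your polynomial $f(\eta)=(u^{-1}yu)_{n,1}$ agrees with the paper's $\det(A_g)$ up to the unit $\pm\det(g)$, so the two gradient computations are equivalent; you extract the linear part via the commutator $[y,\eta]$ and read off the coefficients directly from the entries of $y=g^{-1}\mathsf{x}g$, whereas the paper expands $\det(A_g)$ multilinearly into the terms $D_0,\dots,D_{n-1}$ and tracks the shared variable $z_{n1}$ to isolate the same extra condition $y_{nn}=y_{11}$. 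Two points of difference are worth noting. First, you close the nilpotent case by factoring the characteristic polynomial of the block-triangular $y$ to force $y_{11}=y_{nn}=0$, where the paper instead uses $\mathsf{x}v_1\in\C\{v_1\}$ together with Remark~\ref{nilpotent.image}; both work. Second, your appeal to reducedness (Theorem~\ref{reduced}) to justify ``all partials vanish $\Rightarrow$ singular'' is not circular, since Section~\ref{sec.reduced} does not rely on Theorem~\ref{nilpotent}, and it makes explicit a hypothesis that the paper's one-line invocation of the Jacobian criterion leaves implicit; this direction is only needed for the equality statement and, like Theorem~\ref{reduced}, requires $n\ge 3$ (for $n=2$ the two Hessenberg vectors coincide and the local equation is a square, so the claim degenerates in both your argument and the paper's).
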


We consider also the scheme theoretic structure of $\B(\mathsf{x}, H(\mm))$ defined as in~\cite[Section 4]{ITW2020}.
Use of patches yields the following result, which we prove in Section~\ref{sec.reduced}. 

\begin{THM} \label{reduced}
Suppose $n\ge 3$.
For any $\mathsf{x} \in \mathfrak{gl}_n(\C)$ the scheme $\B(\mathsf{x},H(\mm))\subseteq GL_n(\C)/B$ is reduced.
\end{THM}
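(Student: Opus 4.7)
My plan is to apply Serre's criterion $R_0+S_1$ to show $\B(\mathsf{x},H(\mm))$ is reduced: verify Cohen--Macaulayness and generic reducedness. Assume $\mathsf{x}$ is non-scalar, since otherwise $\B(\mathsf{x},H(\mm))=\B$ is smooth. Cohen--Macaulayness follows immediately from the patch description: on each patch $\NN_g\cong\C^{\binom{n}{2}}$ the scheme $\NN_{g,\mathsf{x}}^\mm$ is cut out by a single nonzero polynomial, so $\B(\mathsf{x},H(\mm))$ is an effective Cartier divisor in the smooth flag variety $\B$. For generic reducedness, Theorem~\ref{nilpotent} gives the containment $\operatorname{Sing}\B(\mathsf{x},H(\mm))\subseteq\B(\mathsf{x},H(\m'))$ with $\m'=(1,n-1,\ldots,n-1,n)$, and since $\B(\mathsf{x},H(\mm))$ is equidimensional of dimension $\binom{n}{2}-1$, it suffices to show $\dim\B(\mathsf{x},H(\m'))<\binom{n}{2}-1$.

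The key observation is that $H(\m')$ is the Lie algebra of the parabolic $P\subseteq G$ stabilizing the two-step partial flag $(\operatorname{span}(e_1)\subset\operatorname{span}(e_1,\ldots,e_{n-1}))$, and $H(\mm)$ is also $P$-stable (the $(n,1)$ matrix entry is invariant under the adjoint $P$-action). Thus both Hessenberg varieties pull back under the smooth projection $\pi:G/B\to G/P$ from subvarieties
$$
Y=\{(V_1,V_{n-1})\in G/P:\mathsf{x} V_1\subseteq V_{n-1}\},\qquad Y'=\{(V_1,V_{n-1}):\mathsf{x} V_1\subseteq V_1,\ \mathsf{x} V_{n-1}\subseteq V_{n-1}\},
$$
and the $\pi$-fibers being $\binom{n-2}{2}$-dimensional flag varieties, this reduces to $\dim Y'<\dim Y=2n-4$. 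Points of $Y'$ consist of an eigenline $V_1\in\P(E_\lambda^R)$ and an $\mathsf{x}$-stable hyperplane $V_{n-1}\in\P(E_{\lambda'}^L)$ (where $E_\lambda^R,E_\lambda^L$ denote the right and left $\lambda$-eigenspaces of $\mathsf{x}$, both of dimension $d_\lambda$) subject to $V_1\subseteq V_{n-1}$. A case analysis on $(\lambda,\lambda')$ and the rank of the bilinear evaluation pairing $E_{\lambda'}^L\times E_\lambda^R\to\C$ gives $\dim\le 2n-5$ in every stratum; the tightest case is $\lambda=\lambda'$ with $d_\lambda=n-1$ (equivalently $\mathsf{x}-\lambda I_n$ of rank one), where the Jordan structure of $\mathsf{x}$ at $\lambda$ forces the pairing to have rank at least $n-2\ge 1$, so the incidence condition cuts codimension one and gives $\dim\le 2(n-2)-1=2n-5$.

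The main obstacle is this case analysis, specifically the bound in the degenerate case $d_\lambda=n-1$. The hypothesis $n\ge 3$ enters precisely here, ensuring the eigenspace pairing has nonzero rank. For $n=2$ and $\mathsf{x}$ a nilpotent $2\times 2$ Jordan block the pairing vanishes identically and one checks by direct computation that $\B(\mathsf{x},H(\mm))$ is a non-reduced fat point in $\P^1$, which explains why the hypothesis $n\ge 3$ cannot be dropped.
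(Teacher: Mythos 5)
Your argument is correct, but it takes a genuinely different route from the paper. The paper proves reducedness by a direct commutative-algebra computation on each patch: it shows (Proposition~\ref{radical}, via the inductive Lemma~\ref{induction}) that $\det(A_g)$ admits a lexicographic term order with square-free initial term, hence generates a radical ideal. You instead invoke Serre's criterion: $S_1$ comes for free because each patch is a hypersurface in $\C^{\binom{n}{2}}$ (so the scheme is Cohen--Macaulay and every component has dimension $\binom{n}{2}-1$ by Krull's principal ideal theorem), and $R_0$ follows from the containment of the singular locus in $\B(\mathsf{x},H(\m'))$, $\m'=(1,n-1,\ldots,n-1,n)$, from Theorem~\ref{nilpotent}, together with the bound $\dim\B(\mathsf{x},H(\m'))<\binom{n}{2}-1$. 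Your dimension count via the projection to the two-step flag variety checks out: for $\lambda\neq\lambda'$ the pairing $E^L_{\lambda'}\times E_\lambda\to\C$ vanishes identically and the stratum has dimension $d_\lambda+d_{\lambda'}-2\le n-2\le 2n-5$; for $\lambda=\lambda'$ with $d_\lambda\le n-2$ the bound $2d_\lambda-2\le 2n-6$ suffices; and in the critical case $d_\lambda=n-1$ the pairing is zero only if $\ker(\mathsf{x}-\lambda I_n)\subseteq\Ima(\mathsf{x}-\lambda I_n)$, impossible for $n\ge 3$, so the incidence condition cuts codimension one. One subtlety you should make explicit: for Serre's criterion you need the containment of Theorem~\ref{nilpotent} to apply to the \emph{scheme-theoretic} (Jacobian) singular locus, since a multiple component would be entirely Jacobian-singular even where the underlying variety is smooth. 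This is fine here because the paper's proof of Theorem~\ref{nilpotent} shows precisely that every point where $\det(A_g)$ has no linear term lies in $\B(\mathsf{x},H(\m'))$, but the logical order matters: your proof depends on Theorem~\ref{nilpotent}, whereas the paper's is independent of it. What each approach buys: the paper's Gr\"obner argument is elementary and self-contained and pinpoints exactly why $n=2$ fails ($\det(A_g)=\mathsf{y}_{12}z_{21}^2$); yours is conceptually cleaner, avoids the term-order induction entirely, and explains the hypothesis $n\ge3$ geometrically through the eigenspace pairing.
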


Combining Theorems \ref{qcount}, \ref{poinpoly} and \ref{nilpotent}, we settle a problem we studied in \cite{EPS}: we determine which Schubert varieties of codimension one in $\B$ are isomorphic to some Hessenberg variety in $\B$.  Write $w_0$ for the longest element in the Weyl group $W=S_n$ of $G$.  For $1 \leq i \leq n-1$, let $s_i$ denote the simple reflection $(i,i+1) \in S_n$.  Given $w \in S_n$, the Schubert variety $X_w \subseteq \B$ has codimension one if and only if $w=s_iw_0$ for some $i \in [n-1]$.  It follows from work of Tymoczko in \cite{Tymoczko2006A} that both $X_{s_1w_0}$ and $X_{s_{n-1}w_0}$ are Hessenberg varieties in $\B$.  We showed in \cite{EPS} that if $3 \leq i \leq n-3$ then no Hessenberg variety in $\B$ is isomorphic to $X_{s_iw_0}$.  (Both Tymoczko's result and ours do not involve the assumption $\b \subseteq H$.)  In Section \ref{schubert.sec}, we obtain the following extension of these results.

\begin{COR} \label{schubert}
If $i \in \{2,n-2\}$ then no Hessenberg variety in $\B$ is isomorphic to $X_{s_iw_0}$.  Therefore, for $j \in [n-1]$, the three conditions
\begin{itemize}
\item some Hessenberg variety in $\B$ is isomorphic to $X_{s_jw_0}$;
\item $X_{s_jw_0}$ is a Hessenberg variety; and
\item $j \in \{1,n-1\}$
\end{itemize}
are equivalent.
\end{COR}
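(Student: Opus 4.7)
The plan is to prove the new content of the corollary: for $i \in \{2, n-2\}$, the Schubert variety $X_{s_iw_0}$ is not isomorphic to any Hessenberg variety in $\B$. Combined with Tymoczko's realization \cite{Tymoczko2006A} of $X_{s_1w_0}$ and $X_{s_{n-1}w_0}$ as Hessenberg varieties, and with the main result of \cite{EPS} ruling out $3 \le i \le n-3$, this yields the stated three-way equivalence. The graph-involution automorphism of $\B$ induced by $g \mapsto w_0(g^{-1})^Tw_0$ on $GL_n(\C)$ swaps $X_{s_iw_0}$ with $X_{s_{n-i}w_0}$ and carries Hessenberg varieties to Hessenberg varieties, so it suffices to rule out $i = 2$.

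Suppose for contradiction that $X_{s_2w_0} \cong \B(\mathsf{x}, H)$. The codimension-one analysis in \cite{EPS} first reduces us to $\b \subseteq H$, and then to either $H = H(\mm)$ or $\mathsf{x} - \lambda I_n$ of rank one. The rank-one alternative is handled by the parallel analysis in \cite{EPS}, which shows that the corresponding Hessenberg varieties are either reducible or isomorphic to $X_{s_1w_0}$ or $X_{s_{n-1}w_0}$, contradicting the irreducible, distinct $X_{s_2w_0}$. Hence $H = H(\mm)$, and by Theorem \ref{irreducible} we must have $\mathsf{x} - \lambda I_n$ of rank $\geq 2$ for every $\lambda \in \C$.

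Next I would compare Poincar\'e polynomials. On the Hessenberg side, Theorem \ref{poinpoly} gives a closed formula in the geometric multiplicities $\{d_j\}$ of the eigenvalues of $\mathsf{x}$. On the Schubert side, $\poin(X_{s_2w_0}; q) = \sum_{v \le s_2w_0} q^{2\ell(v)}$ is read off from the Bruhat interval $[e, s_2w_0]$, whose complement in $S_n$ consists of $w_0$, the $n-2$ permutations $s_jw_0$ with $j \neq 2$, and a combinatorially identifiable family of elements of length $\binom{n}{2}-2$ associated to the $4231$-pattern $[4,2,3,1]$ at the tail of $s_2w_0$. Equating these two expressions should force $\{d_j\}$ into a narrow shape; the test case $n = 4$ shows that agreement pins $\mathsf{x}$ down to having a single eigenvalue of geometric multiplicity two. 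For each surviving $\mathsf{x}$, using $\B(\mathsf{x}, H(\mm)) = \B(\mathsf{x} - cI_n, H(\mm))$ to reduce to $\mathsf{x}$ nilpotent, Theorem \ref{nilpotent} identifies the singular locus of $\B(\mathsf{x}, H(\mm))$ with the Hessenberg variety $\B(\mathsf{x}, H((1, n-1, \ldots, n-1, n)))$; a direct flag-theoretic description (flags whose line is constrained to $\ker \mathsf{x}$ and whose hyperplane is $\mathsf{x}$-invariant) yields its Euler characteristic. The singular locus of $X_{s_2w_0}$, in turn, is the Schubert variety dictated by the $4231$-pattern via the standard Billey--Warrington description, and the contradiction comes from the two Euler characteristics differing; for $n = 4$ one finds $\chi = 6$ on the Hessenberg side versus $\chi(X_{2143}) = 4$ on the Schubert side.

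The main obstacle is the combinatorial Poincar\'e polynomial matching for general $n$: the Bruhat interval below $s_2w_0$ is not of notably clean form, and identifying the missing elements of length $\leq \binom{n}{2} - 2$ precisely for arbitrary $n$ requires careful enumeration via the tableau criterion. Once the candidate Jordan types for $\mathsf{x}$ are isolated, the final singular-locus Euler characteristic comparison afforded by Theorem \ref{nilpotent} is essentially mechanical.
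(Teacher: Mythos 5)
Your outline follows the paper's route almost exactly: reduce via the codimension-one classification in \cite{EPS} to the case $H=H(\mm)$, use Theorem \ref{poinpoly} to pin down the Jordan type of $\mathsf{x}$ from the Poincar\'e polynomial match, and then separate the two varieties by comparing Euler characteristics of singular loci, computed on the Schubert side via Lakshmibai--Sandhya/Billey--Warrington and on the Hessenberg side via Theorem \ref{nilpotent} together with the point count heuristic. However, the steps you flag as obstacles, and the steps you verify only for $n=4$, are precisely the content that must be supplied for general $n$, so as written the argument has gaps. First, the ``main obstacle'' you identify is not one: $\poin(X_{s_2w_0};q^{1/2})=\poin(X_{s_{n-2}w_0};q^{1/2})=[n-2]_q!\left([n]_q[n-1]_q-q^{2n-3}-q^{2n-4}\right)$ is already available in closed form (\cite[Example 5.2]{EPS}), and equating it with the formula of Theorem \ref{poinpoly} forces, for every $n$, a single eigenvalue of geometric multiplicity $n-2$ (i.e.\ $\mathsf{x}-\lambda I_n$ nilpotent of rank two), not multiplicity two as your $n=4$ test suggests. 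Second, identifying the singular locus of $X_{s_2w_0}$ as a single Schubert variety for general $n$ requires an actual argument with the Billey--Warrington/KLR description (one must show the set of maximal ``bad'' patterns reduces to one permutation $v(2)=[n,n-1,\ldots,5,2,1,4,3]$, via the tableau criterion); this is not automatic from the presence of a $4231$ pattern.

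Third, and most importantly, the final comparison must be carried out for general $n$ and for \emph{both} rank-two nilpotent Jordan types, $(2,2,1^{n-4})$ (where $\mathsf{x}^2=0$) and $(3,1^{n-3})$ (where $\mathsf{x}^2\neq 0$); your $n=4$ check reports only the value $6$, which is the $\mathsf{x}^2\neq0$ case, and omits the $(2,2)$ case, whose Euler characteristic is $8$. For general $n$ the three relevant numbers are $\chi(X_{v(i)})=(n-2)!(n^2-5n+6)$ versus $(n-2)!(n^2-5n+7)$ and $(n-2)!(n^2-5n+8)$ for the two Hessenberg singular loci, computed by counting flags over $\ff_p$ with $V_1\subseteq\ker(\bar{\mathsf{x}})$ and $\Ima(\bar{\mathsf{x}})\subseteq V_{n-1}$. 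Once these general-$n$ computations are in place the contradiction is as you describe. Your proposed use of the transpose-inverse involution to reduce $i=n-2$ to $i=2$ is a reasonable economy but is not needed, since all the formulas involved are identical for the two values of $i$.
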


One can compute $\poin(X_{s_iw_0};q)$ directly and use Theorem \ref{poinpoly} to show that, for $i \in \{2,n-2\}$,  $\poin(\B(\mathsf{x},H(\mm));q)=\poin(X_{s_iw_0};q)$ if and only if $\mathsf{x}$ is nilpotent of rank two.  For such $\mathsf{x}$ we compute $|\B_p(\bar{\mathsf{x}}^\prime ,\bar{H}((1,n-1,\ldots,n-1,n)))|$ and so determine the Poincar\'e polynomial of the singular locus $\B(\mathsf{x},H((1,n-1,\ldots,n-1,n)))$ of $\B(\mathsf{x}, \mm)$ using Theorems \ref{qcount} and~\ref{nilpotent}.  A beautiful result conjectured by Lakshmibai and Sandhya in \cite{LS} and proved, independently, by Manivel in \cite{Man}, by Kassel-Lascoux-Reutenauer in \cite{KLR}, and by Billey-Warrington in \cite{BW} allows us to determine the singular loci of the two $X_{s_iw_0}$ in question. We complete the proof of Corollary \ref{schubert} by comparing the Poincar\'e polynomials of these singular loci with those of $\B(\mathsf{x},H((1,n-1,\ldots,n-1,n)))$ for $\mathsf{x}$ nilpotent of rank two.


\

\textbf{Acknowledgements:} 
Escobar is partially supported by NSF Grant DMS 1855598 and NSF CAREER Grant DMS 2142656. Precup is partially supported by NSF Grant DMS 1954001 and NSF CAREER Grant DMS 2237057.  Shareshian was partially supported by NSF Grant DMS 1518389.


\section{Notation and preliminaries}\label{prelim}

We review here various known results about the general linear group, flag varieties, and
Hessenberg varieties. A reader familiar with basic facts about these objects can skip most of this section
and refer back when necessary, but may wish to consult Remark~\ref{rem.notation} which sets notation for the remainder.

\subsection{Basic notation} The symbol $\N$ will denote the set of positive integers.  For $n \in \N$, let $[n]: = \{1, 2, \ldots, n\}$ and $S_n$ denote the symmetric group on $[n]$.  For an integer partition $\lambda=(\lambda_1,\ldots,\lambda_\ell)$, $|\lambda|$ will denote the value $\sum_{j=1}^\ell \lambda_j$.

\subsection{The flag variety} Let $n \in \N$ and $\ff$ be a field. We use two different models for the {\it flag variety} $\B=\B(n,\ff)$.  First, $\B$ consists of all full flags
$$
V_\bullet=\{0=V_0 \subset V_1 \subset \ldots \subset V_n=\ff^n \mid \dim_{\ff} V_i =i \textup{ for all } i\}.
$$
Having fixed a basis $\{e_1,\ldots,e_n\}$ for $\ff^n$, we define the flag $E_\bullet \in \B$ by
$$
E_j=\ff\{e_i \mid i \leq j\}
$$
for each $j \in [n]$.  We use the basis $\{e_1,\ldots,e_n\}$ to coordinatize $GL_n(\ff)$.  So, $a=(a_{ij}) \in GL_n(\ff)$ maps $e_j$ to $\sum_{i=1}^n a_{ij}e_i$.

The natural action of $G_\ff:= GL_n(\ff)$ on $\ff^n$ determines a transitive action of $G_\ff$ on the set $\B$ of full flags.  The stabilizer of $E_\bullet$ in this action is the Borel subgroup $B_\ff$ of upper triangular matrices in $G_\ff$.  So, the map from the coset space $G_\ff/B_\ff$ to $\B$ sending $gB_\ff$ to $g(E_\bullet)$ is a bijection.  We make no distinction between $\B$ and $G_\ff/B_\ff$, other than referring to the set of flags as the {\it flag model} for $\B$ and $G_\ff/B_\ff$ as the {\it coset model}.  We use whichever model suits our purposes at any time.

\subsection{The symmetric group and Bruhat order}

We denote elements of $S_n$ using one-line notation, $$w=[w_1, w_2, \ldots,  w_n]\in S_n$$ where $w_i=w(i)$ in the natural action of $w$ on $[n]$.  For $i \in [n-1]$, we write $s_i$ for the \textit{simple reflection} exchanging $i$ and $i+1$ and fixing all $j \in [n] \setminus \{i,i+1\}$.  Each $w \in S_n$ is a product of simple reflections,
$$
w=s_{i_1}\ldots s_{i_\ell},
$$
with each $i_j \in [n-1]$.  Given such a product with $\ell$ as small as possible, we say that $w$ has \textit{length} $\ell$ and write $\ell(w)=\ell$.

An \textit{inversion} of $w$ is a pair $(i,j) \in [n] \times [n]$ such that $i<j$ and $w_i>w_j$.  We write $\Inv(w)$ for the set of inversions of $w$ and $\invv(w)$ for $|\Inv(w)|$.  It is well known that $\ell(w)=\invv(w)$.

An expression of $w \in S_n$ as a product of $\ell(w)$ simple reflections is a {\it reduced word} for $w$.  The \textit{Bruhat order} $\leq_\br$ on $S_n$ is the partial order in which $v \leq_\br w$ if some reduced word for $v$ is a (not necessarily consecutive) subword of some reduced word for $w$.

The {\it tableau criterion} gives another characterization of the Bruhat order.  Given $w \in S_n$ and $1 \leq p \leq q \leq n$, we write $I_{p,q}(w)$ for the $p^{th}$ smallest element of $\{w_i \mid i \in [q]\}$.  So, for example, if $w=[5,2,3,4,1] \in S_5$, then $I_{2,4}(w)=3$, as $3$ is the second smallest element of $\{5,2,3,4\}$.  The following result of Ehresmann appears as~\cite[Theorem 2.6.3]{Bjorner-Brenti}.

\begin{theorem}[Tableau Criterion] \label{tc}
Let $v,w\in S_n$.  Then $v \leq_\br w$ if and only if $I_{p,q}(v) \leq I_{p,q}(w)$ whenever $1 \leq p \leq q \leq n$.
\end{theorem}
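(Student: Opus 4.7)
The plan is to prove the two implications separately, starting with the easier forward direction.

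For the forward direction, by transitivity of the family of inequalities $I_{p,q}(v)\le I_{p,q}(w)$, it suffices to verify them when $v \lessdot_\br w$ is a Bruhat cover. Such a cover is realized by a transposition $t=(a,b)$ with $a<b$ and $w=vt$, where $v_a<v_b$ and no index $c$ with $a<c<b$ satisfies $v_a<v_c<v_b$. For each $q\in[n]$, the set $\{w_1,\ldots,w_q\}$ coincides with $\{v_1,\ldots,v_q\}$ unless $a\le q<b$, in which case it is obtained by replacing $v_a$ with the strictly larger $v_b$. In the unchanged range the sorted prefixes match exactly, and in the replacement range an elementary exchange argument shows that the $p$-th smallest entry of $\{w_1,\ldots,w_q\}$ is at least that of $\{v_1,\ldots,v_q\}$ for every $p\le q$. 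This gives $I_{p,q}(v)\le I_{p,q}(w)$ in all cases.

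For the reverse direction, I would induct on $\ell(w)$. The base case $\ell(w)=0$ forces $w=e$, and then the hypothesis $I_{p,q}(v)\le p$ for all $p\le q$ collapses to $v=e$. For the inductive step, assume $\ell(w)\ge 1$, so there is a descent $i$ of $w$, i.e.\ $w_i>w_{i+1}$. Set $w'=ws_i$, so $w'<_\br w$ and $\ell(w')=\ell(w)-1$. The only sorted prefix affected in passing from $w$ to $w'$ is the one at $q=i$: replacing $w_i$ by the smaller $w_{i+1}$ can only decrease $I_{p,i}$, while all other $I_{p,q}(w')$ equal $I_{p,q}(w)$. I would split on whether $i$ is a descent of $v$.

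If $v_i<v_{i+1}$, I claim $v$ still satisfies the tableau inequalities against $w'$; the only thing to check is $I_{p,i}(v)\le I_{p,i}(w')$ for all $p\le i$, which reduces to observing that the element $v_{i+1}$ (not in $\{v_1,\ldots,v_i\}$) exceeds every entry of that prefix that could force a violation after $w_i\to w_{i+1}$. By induction $v\le_\br w'\le_\br w$. If instead $v_i>v_{i+1}$, let $v'=vs_i$. A parallel check shows that $v'$ satisfies the tableau inequalities against $w'$, so $v'\le_\br w'$ by induction, and the standard lifting property of Bruhat order (if $v'=vs_i<v$ and $w'=ws_i<w$ with $v'\le w'$, then $v\le w$) completes the step. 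The main obstacle is the bookkeeping in this case analysis: one must track carefully how the multisets $\{v_1,\ldots,v_i\}$ and $\{w_1,\ldots,w_i\}$ shift under right multiplication by $s_i$, and verify in each case that the single potentially problematic inequality at $q=i$ survives the reduction. Once this is set up, the induction closes cleanly.
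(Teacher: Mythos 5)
The paper does not actually prove this statement: it is Ehresmann's criterion, quoted from [Bj\"orner--Brenti, Theorem 2.6.3], so there is no in-paper argument to measure you against. On its own terms, your forward direction is complete and correct: reducing to Bruhat covers and using the exchange observation that replacing one element of a set by a strictly larger one weakly increases every order statistic. Your reverse direction also has the right skeleton --- induction on $\ell(w)$ via a descent $i$ of $w$, splitting on whether $i$ is a descent of $v$, and closing with the lifting property --- and this is essentially the standard proof.

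The gap is in the step you describe as bookkeeping, and it is not merely deferred detail: the justification you give for it is wrong as stated. In Case 1 you claim that $I_{p,i}(v)\leq I_{p,i}(w')$ follows by ``observing that $v_{i+1}$ exceeds every entry of the prefix that could force a violation.'' But the inequalities at $q=i$ and $q=i+1$ together with $v_i<v_{i+1}$ and $w_i>w_{i+1}$ do \emph{not} imply the inequality at $q=i$ for $w'$. Concretely, take $i=2$, $\{w_1,w_2\}=\{1,5\}$ with $w_2=5$, $w_3=3$, and $\{v_1,v_2\}=\{1,4\}$ with $v_2=1$, $v_3=2$: then the sorted prefixes satisfy $(1,4)\leq(1,5)$ and $(1,2,4)\leq(1,3,5)$, both descent conditions hold, yet $(1,4)\not\leq(1,3)=\bigl(I_{1,2}(w'),I_{2,2}(w')\bigr)$. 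What rules this configuration out is the hypothesis at $q=i-1$ (here $v_1=4\leq w_1=1$ fails), and indeed a correct verification of your Case 1 genuinely requires the tableau inequality at $q=i-1$: it supplies the bound $I_{p+1,i}(v)\leq I_{p,i}(w)$ on the range of indices where the swap $w_i\to w_{i+1}$ shifts the sorted prefix of $w$ downward. The same issue arises in Case 2. So the induction does close, but only after importing the $q=i-1$ data, which your sketch neither identifies nor uses; as written, the central combinatorial step is unproved and the heuristic offered for it is false.
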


\subsection{Schubert cells and Schubert varieties} Given $w \in S_n$, we write $\dot{w}$ for the element of $G_\ff$ mapping $e_j$ to $e_{w_j}$ for each $j \in [n]$.  Now the \textit{Bruhat decomposition} states that $G_\ff$ is the disjoint union of $B_\ff-B_\ff$ double cosets, each containing exactly one $\dot{w}$,
$$
G_\ff=\bigsqcup_{w \in S_n}B_\ff \dot{w} B_\ff.
$$
We obtain a corresponding decomposition of the flag variety
\begin{equation}\label{eq_Bruhat_dec}
G_\ff/B_\ff=\bigsqcup_{w \in S_n}(B_\ff \dot{w} B_\ff)/B_\ff.
\end{equation}
The \textit{Schubert cell} associated to $w \in S_n$ is
$$
C_w:=(B_\ff \dot{w} B_\ff)/B_\ff.
$$
The set $C_w$ is isomorphic to affine space $\ff^{\ell(w)}$.  The \textit{Schubert variety} associated to $w$ is
$$
X_w:=\bigcup_{v \leq_\br w}C_v.
$$
If $\ff$ is algebraically closed then $X_w$ is the Zariski closure of $C_w$ in $\B$.

\subsection{Hessenberg varieties in the flag and coset models} Having fixed $n \in \N$, we write $\gg_\ff$ and $\bb_\ff$, respectively, for the $\ff$-Lie algebras of all $n \times n$ matrices and all $n \times n$ upper triangular matrices over $\ff$.  These are the respective Lie algebras of $G_\ff$ and $B_\ff$.

The \textit{adjoint representation} is the action of $G_\ff$ on $\gg_\ff$ by conjugation.  
A \textit{Hessenberg space} is a subspace of $\gg_\ff$ that is $B_\ff$-invariant under this action.  
Given $\mathsf{x} \in \gg_\ff$ and Hessenberg space $H \subseteq \gg_\ff$, the associated {\it Hessenberg variety} is
$$
\B(\mathsf{x},H):=\{gB_\ff \in G_\ff/B_\ff \mid g^{-1}\mathsf{x}g \in H\}.
$$
We assume throughout that $\bb_\ff\subseteq H$.  As explained in the Introduction, each such Hessenberg space is of the form $H=H(\m)$ for some Hessenberg vector $\m=(\m(1),\ldots, \m(n))$. It is straightforward to see that in the flag model,
$$
\B(\mathsf{x},H(\m))=\{V_\bullet \mid \mathsf{x}V_i \subseteq V_{\m(i)} \mbox{ for all } i \in [n]\}.
$$
The flag and coset models of $\B(\mathsf{x}, H(\m))$ define the same scheme-theoretic structure on $\B(\mathsf{x}, H(\m))$ by~\cite[Theorem 10]{ITW2020}.

\subsection{Conjugation of Lie algebra elements and translation of Hessenberg varieties} We will use without comment that fact that if $g \in G_\ff$, $\mathsf{x} \in \g_\ff$, and $H \subseteq \g_\ff$ is a Hessenberg space, then
$$
\B(g^{-1}\mathsf{x}g ,H)=g\B(\mathsf{x},H).
$$
In particular, $\B(g^{-1}\mathsf{x}g ,H)$ and $\B(\mathsf{x},H)$ are isomorphic varieties.

\subsection{The number of points in certain finite varieties} We recall some notation and facts that will be useful when $\ff$ is the finite field $\ff_q$ for some prime power $q$.  For each $n \in \N$, write $[n]_t$ for the polynomial $\sum_{j=0}^{n-1}t^j$ and set
$$
[n]_t!:=\prod_{j=1}^n [j]_t.
$$
It is straightforward to compute that for every prime power $q$,
\begin{eqnarray}\label{eqn.proj.count}
|\P^n(\ff_q)|=[n]_q
\end{eqnarray}
and
\begin{equation}\label{eqn.flag.count}
|G_{\ff_q}/B_{\ff_q}|=[n]_q!.
\end{equation}
A consequence of the Bruhat decomposition \eqref{eq_Bruhat_dec} is that
	\begin{equation}\label{eq_flag_Poinc}
	\poin(G_{\C}/B_{\C};q^{1/2})
	=
	\sum_{w\in S_n} q^{\ell(w)}
	=
	[n]_q!
	=|G_{\ff_q}/B_{\ff_q}|
	\end{equation}
where the second equality is well known, see e.g.~\cite[Corollary~1.3.13]{Stanley}.
Thus, Theorem~\ref{qcount} can be seen as a generalization of this result.

\subsection{A remark about nilpotent linear transformations} We require the following straightforward result in the proofs of Theorem~\ref{nilpotent} and Proposition~\ref{echess} below.  Here $\ff$ is an arbitrary field.

\begin{remark} \label{nilpotent.image}
If $\mathsf{x} \in M_n(\ff)$ is nilpotent and $V_{n-1} \subseteq \ff^n$ is an $\mathsf{x}$-invariant $(n-1)$-dimensional subspace, then $V_{n-1}$ contains $\Ima(\mathsf{x})$.
\end{remark}

\begin{proof}
Since the hyperplane $V_{n-1}$ is $\mathsf{x}$-stable and $\mathsf{x}$ is nilpotent, it is also nilpotent as a linear transformation on $\ff^n/V_{n-1}\simeq \ff$. Thus it acts as the zero matrix on the quotient, and we conclude  $\Ima(\mathsf{x}) \subseteq V_{n-1}$.  
\end{proof}

\subsection{The general linear vs.~special linear group over $\C$}

Our main results are all stated and proved under the assumption that $G_\C=GL_n(\C)$.  They hold also if we assume $G_\C=SL_n(\C)$.
Recall that $SL_n(\C)$ has Lie algebra $\mathfrak{sl}_n(\C)$ of $n\times n$ matrices with trace $0$.  As $SL_n(\C)$ acts transitively on the set $\{V_\bullet\}$ of all full flags in $\C^n$, there is an isomorphism between the flag varieties $GL_n(\C)/B$ and $SL_n(\C)/(B \cap SL_n(\C))$, where $B$ is the group of nonsingular upper triangular complex matrices.  Let ${\mathfrak s}$ be the set of scalar matrices in $\gl_n(\C)$ and let $\b$ be the Lie algebra of $B$.  The map sending $H$ to $H \cap \sl_n(\C)$ is a bijection from the set of $B$-invariant subspaces of $\gl_n(\C)$ containing $\b$ to the set of $B \cap SL_n(\C)$-invariant subspaces of $\sl_n(\C)$ containing $\b \cap \sl_n(\C)$.  Indeed, the inverse map sends $H^\prime$ to $H^\prime + {\mathfrak s}$. 

One can define Hessenberg varieties in $SL_n(\C)/(B \cap SL_n(\C))$ as they were defined in $GL_n(\C)/B$.  Given $\mathsf{x} \in \gl_n(\C)$, there is some scalar $\lambda$ such that $\mathsf{x}-\lambda I_n \in \sl_n(\C)$.  If $H \subseteq \gl_n(\C)$ is a Hessenberg space containing $\b$, then 
$$
\B(\mathsf{x},H)=\B(\mathsf{x}-\lambda I_n,H) \cong \B(\mathsf{x}-\lambda I_n,H \cap \sl_n(\C)) \subseteq SL_n(\C)/(B \cap SL_n(\C)).
$$
Conversely, if $\mathsf{x} \in \sl_n(\C)$ and $H^\prime \subseteq \sl_n(\C)$ is a $B \cap SL_n(\C)$-invariant subspace containing $\b \cap \sl_n(\C)$, then
$$
\B(\mathsf{x},H^\prime) \cong \B(\mathsf{x},H^\prime+{\mathfrak s}) \subseteq GL_n(\C)/B.
$$
The upshot of all of this is that if one considers only Hessenberg spaces containing a Borel subalgebra, then there is a bijection between the set of Hessenberg varieties in $GL_n(\C)/B$ and the set of Hessenberg varieties in $SL_n(\C)/(B \cap SL_n(\C))$ under which corresponding varieties are isomorphic.  A similar statement holds for any reductive algebraic group with irreducible root system of type $A_{n-1}$.  Matters are more complicated if one does not assume that Hessenberg spaces contain Borel subalgebras but this will not arise in our work below.

We conclude by choosing shorthand notation for the relevant groups for use in Sections~3-8.
\begin{remark}\label{rem.notation}
For the remainder of the paper we use the notation $G$ for $G_\C$ and $\overline{G}$ for $G_{\ff_p}$ and similarly for their subgroups and respective Lie algebras. In addition, $\B=G_\C/B_\C$ and $\B_p:= G_{\ff_p}/B_{\ff_p}$ throughout. In Section~\ref{qcproof.sec} below we write $C_w$ for the Schubert cell $B\dot wB/B$ in $\B$ and $C_{w,p}$ for the Schubert cell $\bar{B}\dot w \bar{B}/\bar{B}$ in~$\B_p$.
\end{remark}


\section{Proof of the point count heuristic} \label{qcproof.sec}

A close examination of Tymoczko's paper \cite{Tymoczko2006} will yield a proof of Theorem \ref{qcount}.  
Given a matrix $\mathsf{x}\in \g$ we may assume, up to replacing $\mathsf{x}$ with a conjugate, that $\mathsf{x}=\semi+\nilp$ where $\semi, \nilp\in \g=\mathfrak{gl}_n(\C)$ are commuting matrices with $\nilp$ nilpotent and $\semi$ diagonal.

\begin{definition} Suppose $\lambda=(\lambda^1, \ldots, \lambda^r)$ is a list of partitions such that $|\lambda^1|+\cdots+|\lambda^r|=n$.  We say that $\mathsf{x}=\semi+\nilp \in \g$ \textit{is of type} $\lambda$ if
\begin{enumerate}
\item $\semi$ has exactly $r$ distinct eigenvalues $c_1, \ldots, c_r$,
\item if $\semi_{ii}=\semi_{kk}$ with $i<k$, then $\semi_{jj}=\semi_{kk}$ whenever $i<j<k$,
\item for each $i\in [r]$, the eigenspace $E_i$ of $\semi$ associated to eigenvalue $c_i$ has dimension $|\lambda^i|$, and
\item for each $i\in [r]$, the Jordan form of the restriction of $\nilp$ to $E_i$, denoted herein by $\nilp_{c_i}$, has Jordan blocks whose sizes are given by the parts of $\lambda^i$.
\end{enumerate}
\end{definition} 
Matrices of the same type need not have the same eigenvalues and so need not be conjugate.  We show below, however, that $\poin(\B(\mathsf{x},H(\m));q)=\poin(\B(\mathsf{y},H(\m));q)$ for every Hessenberg vector $\m$ whenever $\mathsf{x}$ and $\mathsf{y}$ have the same type.

We describe how to fix a representative of each conjugacy class of $\g$, as in~\cite{Tymoczko2006}. Given $\mathsf{x}\in \g$ we say that $\mathsf{x}_{ij}$ is a \textit{pivot} of $\mathsf{x}$ if $\mathsf{x}_{ij}\neq 0$ and if all entries below and all entries to its left vanish.   Suppose $\nilp$ is a nilpotent matrix with Jordan blocks whose sizes are given by the parts of partition~$\lambda$.  Take the Young diagram of shape $\lambda$ and label the boxes with $[n]$ starting at the bottom of the leftmost column, incrementing by one while moving up, then moving to the lowest box of the next column and so on.  The {\bf highest form} of $\nilp$ is the matrix with $\nilp_{ij}=1$ if $i$ labels a box immediately to the left of $j$ and $\nilp_{ij}=0$ otherwise.  All nonzero entries of the highest form of $\nilp$ are pivots, and the highest form is an upper triangular representative for the conjugacy class in which the set of columns containing pivots is right justified.   That is, if a pivot appears in column $i$ with $i<n$, then a pivot appears in column $i+1$.

If $\mathsf{x}=\semi+\nilp \in \g$ is in Jordan normal  form and of type $\lambda = (\lambda^1, \ldots, \lambda^r)$, then we may write $\mathsf{x}=\sum_{i=1}^r (\semi_i + \nilp_{c_i})$ where $\semi_i+\nilp_{c_i}$ acts as $\mathsf{x}$ does on the eigenspace $E_i$ and annihilates $E_j$ when $j \neq i$.  Replacing each $\nilp_{c_i}\in \gl(E_i)$ with its highest form, we obtain a fixed representative for the conjugacy class of $\mathsf{x}$.  We say that this representative is in {\bf highest form and permuted Jordan form} (abbreviated below using the acronym {\bf HFPJF}).

\begin{example}\label{ex.HFPJF} Suppose $n=6$ and $\lambda= ((2,2), (2))$. Let $c_1=1$ and $c_2 = -1$.  The highest form of $\nilp_{c_1}$ and $\nilp_{c_2}$ are obtained from the following labelings of the Young diagrams for $(2,2)$ and $(2)$, respectively.
\[
\young(24,13)   \quad\quad \quad\quad \quad\quad \young(56)
\]
The first matrix below in Jordan form and has type $\lambda$; the second is of type $\lambda$ and is the representative for its conjugacy class in HFPJF.
\[
\begin{bmatrix} 1 & 1 & 0 & 0 & 0 & 0\\ 0 & 1 & 0 & 0 & 0 & 0\\ 0 & 0 & 1 & 1 & 0 & 0\\ 0 & 0 & 0 & 1 & 0 & 0\\ 0 & 0 & 0 & 0 & -1 & 1\\ 0 & 0 & 0 & 0 & 0 & -1  \end{bmatrix} \quad\quad\quad\quad 
\begin{bmatrix} 1 & 0 & 1 & 0 & 0 & 0\\ 0 & 1 & 0 & 1 & 0 & 0\\ 0 & 0 & 1 & 0 & 0 & 0\\ 0 & 0 & 0 & 1 & 0 & 0\\ 0 & 0 & 0 & 0 & -1 & 1\\ 0 & 0 & 0 & 0 & 0 & -1  \end{bmatrix}
\]
Notice that in the second matrix the pivots for each $\nilp_{c_i}$ occur in columns as far to the right as possible in each block.
\end{example}

Given a matrix $\mathsf{x} \in \g$ in HFPJF, a permutation $w \in S_n$, and fixed Hessenberg vector $\m$, we define
\begin{eqnarray*}\label{eqn.dim}
d_{w,\mathsf{x}}^\m &:=& \left|\left\{ (i<k) \in \inv(w^{-1}) \mid \begin{array}{c} \textup{$\semi_{ii}=\semi_{kk}$ and if $\nilp_{kj}$ is a pivot of } \\ \textup{$\nilp_{\semi_{ii}}$, then $\m(w^{-1}(j)) \geq w^{-1}(i)$} \end{array}  \right\} \right|\\
&&\quad\quad\quad + |\{ (i<k) \in \inv(w^{-1}) \mid \semi_{ii}\neq \semi_{kk} \textup{ and } \m(w^{-1}(k)) \geq w^{-1}(i)  \}|.
\end{eqnarray*}

The following theorem is~\cite[Theorem 6.1]{Tymoczko2006}.  Note that Tymoczko's conventions for permutation multiplication differ from ours, which is why $w^{-1}$ appears in the formula for $d_{w,\mathsf{x}}^{\m}$ above instead of $w$.

\begin{theorem}[Tymoczko] \label{thm.Tymoczko} Fix a Hessenberg vector $\m$
and suppose $\mathsf{x}=\semi+\nilp$ is 
in HFPJF.  For each $w \in S_n$, $C_{w}\cap \B(\mathsf{x},H(\m))\neq \emptyset$ if and only if $ \nilp\in \dot{w}H(\m)\dot{w}^{-1}$, in which case $C_w\cap \B(\mathsf{x},H(\m))$ is isomorphic to the affine space~$\C^{d_{w,\mathsf{x}}^\m}$. 
\end{theorem}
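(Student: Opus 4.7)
The plan is to verify the theorem by direct local analysis on each Schubert cell, following the strategy of \cite{Tymoczko2006}. First, I would parametrize $C_w$ by unique coset representatives in a unipotent subgroup of dimension $\ell(w)$: every element of $C_w$ has a unique representative of the form $\dot w\, u\, B$, where $u$ ranges over the affine space $U \cap \dot w U_- \dot w^{-1}$, whose coordinates are indexed by the inversions of $w^{-1}$. Under this identification, membership of $\dot w u B$ in $\B(\mathsf{x}, H(\m))$ becomes the condition $u^{-1}(\dot w^{-1} \mathsf{x}\, \dot w)u \in H(\m)$, i.e.\ the vanishing of $(u^{-1}yu)_{ij}$ for all $(i,j)$ with $i > \m(j)$, where $y := \dot w^{-1}\mathsf{x}\,\dot w$.

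Next, I would decouple the semisimple and nilpotent contributions. Because $\semi$ is diagonal, $\dot w^{-1}\semi\,\dot w$ is again diagonal and therefore lies in $H(\m) \supseteq \b$ automatically; meanwhile $(\dot w^{-1}\nilp\,\dot w)_{ij} = \nilp_{w(i),w(j)}$. Setting $u = I$ then yields the nonemptiness criterion: the point $\dot w B$ itself lies in $C_w \cap \B(\mathsf{x}, H(\m))$ if and only if $\dot w^{-1}\nilp\,\dot w \in H(\m)$, equivalently $\nilp \in \dot w H(\m)\dot w^{-1}$. One direction of the stated equivalence is immediate; for the other, the triangular elimination of step three below will produce a whole affine space of solutions as soon as the single point $u = I$ satisfies the conditions.

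Third, I would compute the dimension by placing the defining equations into triangular form with respect to a carefully chosen order on the coordinates of $u$. Expanding $u^{-1}yu = y + [y, u-I] + (\text{higher-order terms})$ and reading off each equation $(u^{-1}yu)_{ij} = 0$ for $i > \m(j)$, one shows that each equation either vanishes identically (contributing a free coordinate) or isolates a single new coordinate with a nonzero scalar leading coefficient (eliminating it in terms of previously determined ones). The HFPJF normalization on $\nilp$ — which places the pivots of each $\nilp_{\semi_{ii}}$ in the rightmost available columns of the corresponding $\semi$-eigenblock — is precisely what makes this elimination triangular. The surviving free coordinates are parametrized by the pairs $(i<k) \in \inv(w^{-1})$ under two mutually exclusive regimes: when $\semi_{ii} = \semi_{kk}$ the $\semi$-commutator contribution vanishes and control passes to the pivot structure of $\nilp_{\semi_{ii}}$ combined with $\m$, matching the first summand in the definition of $d_{w,\mathsf{x}}^{\m}$; when $\semi_{ii} \neq \semi_{kk}$ the $\semi$-commutator dominates and only the raw condition $\m(w^{-1}(k)) \geq w^{-1}(i)$ matters, matching the second summand.

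The main obstacle is the bookkeeping in step three: one must verify that the defining equations genuinely triangularize, with nonvanishing scalar leading coefficients at each step, so that the intersection is truly an affine cell rather than a variety with merely matching Poincar\'e polynomial. This is exactly the point at which the HFPJF hypothesis is essential — pivots of $\nilp$ placed elsewhere within their $\semi$-eigenblocks could collide in a single equation and destroy triangularity — and it is simultaneously the point at which the two summands of $d_{w,\mathsf{x}}^{\m}$ must be cleanly matched against the correct set of surviving coordinates.
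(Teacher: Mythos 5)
Your plan follows the same route that Tymoczko's original proof takes and that the paper itself reproduces over $\ff_p$ in Section~\ref{qcproof.sec} (the paper does not reprove Theorem~\ref{thm.Tymoczko}, but Lemma~\ref{lemma.fiber-count} and the theorem following it are exactly the finite-field version of this argument): parametrize the cell by a unipotent group with coordinates indexed by $\Inv(w^{-1})$, note that the diagonal part $\semi$ is harmless at the base point, and triangularize the defining equations row by row using the HFPJF placement of pivots. Your matching of the surviving free coordinates with the two summands of $d_{w,\mathsf{x}}^{\m}$ is the right bookkeeping, and it is the content of Lemma~\ref{lemma.fiber-count}.

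There is, however, one genuine logical gap: the forward implication $C_w\cap\B(\mathsf{x},H(\m))\neq\emptyset\Rightarrow\nilp\in\dot wH(\m)\dot w^{-1}$. Setting $u=I$ only gives the converse (the point $\dot wB$ lies in the intersection when $\nilp\in\dot wH(\m)\dot w^{-1}$), and your remark that ``the triangular elimination will produce a whole affine space of solutions as soon as $u=I$ satisfies the conditions'' argues in the wrong direction: you must exclude the possibility that the intersection is nonempty even though the base point $\dot wB$ fails. Relatedly, your dichotomy for the equations (``vanishes identically or isolates a single new coordinate with nonzero leading coefficient'') omits the third case that arises precisely when $\nilp\notin\dot wH(\m)\dot w^{-1}$: an equation can reduce to a nonzero constant set equal to zero, making the system inconsistent. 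To close the gap you must either run the elimination without assuming $\nilp\in\dot wH(\m)\dot w^{-1}$ and show that this inconsistency always occurs in that case, or argue as the paper does in its finite-field proof: for $u$ in the cell's unipotent group one has $u^{-1}\nilp u=\nilp+\mathsf{y}$ with $\mathsf{y}$ supported on matrix entries disjoint from those of $\nilp$, and since $\dot wH(\m)\dot w^{-1}$ is a coordinate subspace, $u^{-1}\nilp u\in\dot wH(\m)\dot w^{-1}$ already forces $\nilp\in\dot wH(\m)\dot w^{-1}$. A minor separate point: the coset representatives of $C_w$ should be $u\dot wB$ with $u\in U\cap\dot wU_-\dot w^{-1}$, as in~\eqref{eqn.Schubert.cell}, not $\dot wuB$ --- as literally written, $\dot wuB=\dot wB$ for every upper unitriangular $u$, so your parametrization collapses; the membership condition should accordingly read $u^{-1}\mathsf{x}u\in\dot wH(\m)\dot w^{-1}$ rather than $u^{-1}(\dot w^{-1}\mathsf{x}\dot w)u\in H(\m)$.
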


By Theorem \ref{thm.Tymoczko}, each $\B(\mathsf{x},H(\m))$ is paved by affines. 
Such a paving determines the Betti numbers of any variety.  A key observation is that if $\mathsf{x}$ is in HFPJF and $w \in S_n$, then the nonnegative integer $d_{w,\mathsf{x}}^\m$ depends on the type of $\mathsf{x}$ but not on the eigenvalues $c_i$ of $\mathsf{x}$.  This yields the following result.

\begin{corollary}\label{cor.bettinumber} Suppose $\m$ is a Hessenberg vector and $\mathsf{x}\in \g$ is of type $\lambda$. Let $\mathsf{x}'=\semi+\nilp \in \g$ be the representative for $\mathsf{x}$ in HFPJF. Then
\[
\poin(\B(\mathsf{x},H(\m));q) = \sum_{\substack{w\in S_n\\ \nilp \in \dot{w}H(\m)\dot{w}^{-1}}} q^{2d_{w,\mathsf{x}'}^\m}.
\]
In particular, if $\mathsf{x},\mathsf{y}\in \g$ have the same type, then for every Hessenberg vector $\m$ the Hessenberg varieties $\B(\mathsf{x},H(\m))$ and $\B(\mathsf{y},H(\m))$ have the same Poincar\'e polynomial.
\end{corollary}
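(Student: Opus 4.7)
The plan is to deduce this corollary almost directly from Tymoczko's Theorem~\ref{thm.Tymoczko} together with the standard fact that an affine paving computes Betti numbers. First, since $\mathsf{x}'$ is defined as the HFPJF representative of the conjugacy class of $\mathsf{x}$, there exists $g \in G$ with $g^{-1}\mathsf{x}g = \mathsf{x}'$, and the translation principle recorded earlier in the paper gives $\B(\mathsf{x}',H(\m)) = g^{-1}\B(\mathsf{x},H(\m))$, so the two varieties are isomorphic and in particular have equal Poincar\'e polynomials. Thus it suffices to compute $\poin(\B(\mathsf{x}',H(\m));q)$.

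Next, I would apply Theorem~\ref{thm.Tymoczko} to $\mathsf{x}'$: the intersections $C_w \cap \B(\mathsf{x}',H(\m))$ with $w \in S_n$ ranging over permutations satisfying $\nilp \in \dot w H(\m)\dot w^{-1}$ (the other intersections being empty) form an affine paving of $\B(\mathsf{x}',H(\m))$, where the non-empty piece indexed by $w$ is isomorphic to $\C^{d_{w,\mathsf{x}'}^\m}$. Since every affine paving of a complex variety computes its integral (and hence complex) cohomology --- each cell $\C^d$ contributes one class in degree $2d$ and nothing else --- summing $q^{2d}$ over the cells gives exactly the claimed Poincar\'e polynomial formula.

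For the second assertion, I would argue that both the index set and the exponents in this sum depend only on the type $\lambda$ of $\mathsf{x}$, not on the particular eigenvalues $c_1,\ldots,c_r$. Indeed, in HFPJF, the nilpotent part $\nilp$ is a fixed $0$-$1$ matrix determined entirely by the partitions $\lambda^1,\ldots,\lambda^r$ (via the highest-form recipe applied block by block), so the condition $\nilp \in \dot w H(\m)\dot w^{-1}$ depends only on $\lambda$. Likewise, inspecting the formula for $d_{w,\mathsf{x}'}^\m$, one sees that the eigenvalues of $\semi$ enter only through the equality relation $\semi_{ii} = \semi_{kk}$, which by condition (2) in the definition of type is determined by $\lambda$, together with the pivot positions of $\nilp$, which again depend only on $\lambda$. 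Hence if $\mathsf{x}$ and $\mathsf{y}$ share the same type, their HFPJF representatives produce identical sums and the Poincar\'e polynomials agree.

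There is no real obstacle here; the only point requiring care is the bookkeeping that verifies the quantities $\{w : \nilp \in \dot w H(\m)\dot w^{-1}\}$ and $d_{w,\mathsf{x}'}^\m$ really are invariants of the type. I would present this as a brief explicit inspection of Tymoczko's formula, making it clear that the concrete numerical values of the $c_i$ never appear.
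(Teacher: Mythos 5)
Your proposal is correct and follows essentially the same route as the paper: conjugate to the HFPJF representative, invoke Theorem~\ref{thm.Tymoczko} to get the affine paving whose cells compute the Betti numbers, and observe that both the index set $\{w : \nilp \in \dot w H(\m)\dot w^{-1}\}$ and the exponents $d_{w,\mathsf{x}'}^\m$ depend only on the type (the eigenvalues enter Tymoczko's formula only through the relation $\semi_{ii}=\semi_{kk}$, which is fixed by condition (2) in the definition of type). The paper's own justification is exactly this observation, stated more tersely.
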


We are ready to state and prove a slightly stronger version of Theorem \ref{qcount} after recalling some notation and introducing some additional objects.  Given an integer matrix $\mathsf{x}\in \gl_n(\C)$, we write $\bar{\mathsf{x}}$ for the element of $\gl_n(\ff_p)$ obtained by reducing each entry of $\mathsf{x}$ modulo $p$, $\bar{B}$ for the subgroup of $\bar{G}:=GL_n(\ff_p)$ consisting of upper triangular matrices, and $\B_p$ for the flag variety $\bar G/\bar B$.  A Hessenberg vector $\m$ determines a Hessenberg subspace $\bar{H}(\m)$ of $\gl_n(\ff_p)$ containing the Lie algebra $\bar\fb$ of $\bar B$, and the Hessenberg variety $\B_p(\bar{\mathsf{x}},\bar{H}(\m))$ consists of those $g\bar{B} \in \B_p$ satisfying $g^{-1}\bar{\mathsf{x}}g \in \bar{H}(\m)$.  

Let $U \leq B$ denote the unipotent subgroup of upper triangular matrices with $1$'s on the diagonal. Set  $U_-: = \dot w_0 U \dot w_0^{-1}$, the unipotent subgroup of lower triangular matrices with $1$'s on the diagonal. 
Write $\bar{U}$ and $\bar{U}_-$ for the analogous subgroups of $\bar{G}$. For each $w \in S_n$, consider the subgroup $\bar{U}_w:= \bar{U}\cap \dot w \bar{U}_- \dot w^{-1}$. There is an isomorphism
\begin{eqnarray} \label{eqn.Schubert.cell}
 \bar{U}_w \to C_{w,p} = \bar{B}\dot w\bar{B}/\bar{B},\;  u \mapsto u\dot w \bar{B},
\end{eqnarray}
and $|\bar{U}_{w}| = p^{\ell(w)}$. Finally, given $i \in [n]$, let $\bar{U}_i$ be the subgroup of $\bar{U}$ consisting of all $u \in \bar{U}$ such that $u_{jk}=0$ unless $j \in \{i,k\}$.  

The following lemma gives us a finite field version of \cite[Lemma 5.2]{Tymoczko2006}.

\begin{lemma}\label{lemma.fiber-count}  Let $g\in \bar{U}$ and $\m$ be a Hessenberg vector. Suppose $\mathsf{x}=\semi+\nilp\in \g$ has integer entries and is in HFPJF and $w\in S_n$ satisfies $\nilp \in \dot w H(\m) \dot w^{-1}$.   For each $i \in [n-1]$, set
\begin{eqnarray*}
d_i &:=& \left|\left\{ (i<k) \in \inv(w^{-1}) \mid \begin{array}{c} \textup{$\semi_{ii}=\semi_{kk}$ and if $\nilp_{kj}$ is a pivot of } \\ \textup{$\nilp_{\semi_{ii}}$, then $w^{-1}(i)\leq \m(w^{-1}(j)) $} \end{array}  \right\} \right|\\
&&\quad\quad\quad + |\{ (i<k) \in \inv(w^{-1}) \mid \semi_{ii}\neq \semi_{kk} \textup{ and }\m(w^{-1}(k)) \geq w^{-1}(i)  \}|.
\end{eqnarray*}
If $p$ is a prime such that no two distinct entries of $\mathsf{x}$ have difference divisible by $p$, then for each $i$ the set 
\begin{eqnarray}\label{eqn.fibers}
\{ u\in \bar{U}_{i}\cap \bar{U}_{w} \mid  (u^{-1}g^{-1}\bar{\mathsf{x}} g u)_{ij}=0 \; \textup{ for all } j>i,\; \m(w^{-1}(j)) <w^{-1}(i)  \} 
\end{eqnarray}
is isomorphic to $\mathbb{F}_p^{d_i}$. In particular, it has cardinality $p^{d_i}$.
\end{lemma}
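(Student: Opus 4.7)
The plan is to follow Tymoczko's proof of the characteristic-zero analog \cite[Lemma~5.2]{Tymoczko2006}, tracking where the ground field enters the argument and appealing to the hypothesis on $p$ precisely at the one step where inversion is needed. First I parametrize $\bar{U}_i \cap \bar{U}_w$: an element $u \in \bar{U}_i$ has the form $u = I + N$ with $N$ supported in row $i$ and $N_{ij} = c_j \in \ff_p$ for $j > i$, and intersection with $\bar{U}_w$ forces $c_j = 0$ unless $(i,j) \in \inv(w^{-1})$. Since $N^2 = 0$, $u^{-1} = I - N$, so direct expansion yields
\[
(u^{-1}Yu)_{ij} = Y_{ij} + Y_{ii}c_j - \sum_{k > i} c_k Y_{kj} - \Big(\sum_{k > i} c_k Y_{ki}\Big)c_j, \qquad Y := g^{-1}\bar{\mathsf{x}}g.
\]
Because $\mathsf{x}$ is in HFPJF it is upper triangular, and $g \in \bar{U}$ preserves upper triangularity, so $Y_{ki} = 0$ for $k > i$. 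The quadratic term therefore vanishes, and the $(i,j)$-equation becomes linear in the $c_k$'s with the coefficient of $c_j$ equal to $Y_{ii} - Y_{jj} = \bar{\semi}_{ii} - \bar{\semi}_{jj}$, using that $g$ unipotent upper triangular preserves the diagonal of $\bar{\mathsf{x}}$.

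Next I analyze the system $(u^{-1}Yu)_{ij} = 0$ for $j$ in $J := \{j > i : \m(w^{-1}(j)) < w^{-1}(i)\}$. Using $\m(r) \geq r$, any $j \in J$ satisfies $w^{-1}(j) \leq \m(w^{-1}(j)) < w^{-1}(i)$, hence $(i,j) \in \inv(w^{-1})$, so every equation has a genuine $c_j$ term. Two cases arise. If $\semi_{ii} \neq \semi_{jj}$, the hypothesis on $p$ guarantees the leading coefficient $\bar{\semi}_{ii} - \bar{\semi}_{jj}$ is a unit in $\ff_p$, so the equation uniquely determines $c_j$ in terms of the other $c_k$'s, eliminating one free parameter. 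If $\semi_{ii} = \semi_{jj}$, the leading coefficient vanishes and the equation reduces to a $c_j$-free linear relation among the $(c_k)_{k > i,\, k \neq j}$, whose coefficients depend only on $\bar{\nilp}$ and $g$ and are therefore characteristic-agnostic; the HFPJF pivot structure of $\nilp_{\semi_{ii}}$ then determines combinatorially whether the residual equation is vacuous or imposes a nontrivial constraint, exactly as in Tymoczko's characteristic-zero analysis.

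Matching the case analysis against the two summands in the definition of $d_i$ shows that the $c_k$'s surviving as free parameters are counted precisely by $d_i$: the second summand counts pairs with $\semi_{ii} \neq \semi_{kk}$ and $k \notin J$; the first counts pairs with $\semi_{ii} = \semi_{kk}$ where the HFPJF pivot condition ensures the corresponding residual equation is vacuous. The solution set is therefore an affine $\ff_p$-subspace of dimension $d_i$, of cardinality $p^{d_i}$. The main obstacle is the combinatorial bookkeeping in the equal-eigenvalue case: verifying that the HFPJF normalization of $\nilp$ forces the residual linear constraints to collapse in exact agreement with the pivot condition in the first summand of $d_i$, so that Tymoczko's characteristic-zero accounting transfers verbatim modulo $p$. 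The hypothesis on $p$ enters only to invert $\bar{\semi}_{ii} - \bar{\semi}_{jj}$ when the corresponding integer entries differ.
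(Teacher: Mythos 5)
Your proposal follows the same route as the paper: parametrize $\bar U_i\cap\bar U_w$, expand $(u^{-1}g^{-1}\bar{\mathsf{x}}gu)_{ij}$ into a linear system whose $c_j$-coefficient is $\bar{\semi}_{ii}-\bar{\semi}_{jj}$, invert that difference when the integer entries differ (this is exactly where the hypothesis on $p$ enters), and in the equal-eigenvalue case use the HFPJF pivot structure of $\nilp$ (a $0$--$1$ matrix, so its pivots survive reduction mod $p$) to decide which equations are vacuous. The only place you are thinner than the paper is the equal-eigenvalue bookkeeping, which you defer to Tymoczko's characteristic-zero analysis; the paper carries it out explicitly (showing a nonvacuous equation singles out a pivot row $k$ with $(i<k)\in\Inv(w^{-1})$, and that distinct equations eliminate distinct variables, so the system is consistent with exactly $d_i$ free parameters), but the content is the same.
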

\begin{proof} Our assumptions on the prime $p$ guarantee that $\semi_{ii}\neq \semi_{jj}$ if and only if $\bar{\semi}_{ii}\neq \bar{\semi}_{jj}$. Since $\nilp$ is a $0-1$ matrix we furthermore have that $\nilp$ and $\bar{\nilp}$ have the same pivots, and in particular, that $\nilp_{\semi_{ii}}$ and $\bar{\nilp}_{\semi_{ii}}$ have pivots that occur in the rightmost columns.
Since $g\in \bar{U}$, $g$ is upper triangular with $1$'s on the diagonal.  We have
\[
g^{-1}\bar{\mathsf{x}}g = g^{-1}(\bar{\semi}+\bar{\nilp})g = \bar{\semi} + \nilp'
\]
where $\nilp':= g^{-1}\bar{\nilp}g + g^{-1}\bar{\semi}g-\bar{\semi} \in \gl_n(\mathbb{F}_p)$ is a nilpotent matrix.
Since $\bar{\semi}$ is diagonal, $(g^{-1}\bar{\semi}g-\bar{\semi})_{ij}=0$ whenever $\semi_{ii}=\semi_{jj}$.  In particular, we get that nilpotent matrices $\nilp'_{\semi_{ii}}$ and $\bar{\nilp}_{\semi_{ii}}$ and $\nilp_{\semi_{ii}}$ have pivots in exactly the same entries (cf.~\cite[Proposition 4.6]{Tymoczko2006}).  

By definition,
\[
\bar{U}_{i}\cap \bar{U}_{w} = \left\{ I_n + \sum_{(i\neq\ell)} a_{i\ell}E_{i\ell} \mid a_{i,\ell} \in \mathbb{F}_p,\, a_{i,\ell}=0 \,\textup{ unless }\, (i<\ell)\in \inv(w^{-1}) \right\}.
\]
To complete the proof, we argue that the system of equations defining the set~\eqref{eqn.fibers} is consistent and has $d_i$ free variables.   Let $u = I_n + \sum_{(i\neq\ell)} a_{i,\ell}E_{i,\ell}$ be a generic element of $\bar{U}_{i} \cap \bar{U}_{w}$.  Note that $u^{-1}= I_n -  \sum_{(i\neq\ell)} a_{i,\ell}E_{i,\ell}$.  A straightforward computation now yields 
\begin{eqnarray*}
 (u^{-1}g^{-1}\bar{\mathsf{x}} g u)_{ij} &=&  (u^{-1}\bar{\semi} u)_{ij} + (u^{-1}\nilp'u)_{ij}\\ 
 &=& (\bar{\semi}_{ii}-\bar{\semi}_{jj})a_{ij} + \nilp_{ij}' - \sum_{\substack{(i<\ell) \in \inv(w^{-1})\\ \ell<j}} a_{i\ell}\nilp_{\ell j}'.
\end{eqnarray*}
If $\m(w^{-1}(j)) < w^{-1}(i)$, then $(i<j) \in \inv(w^{-1})$ and using the formula above we see that each of the linear equations defining the set~\eqref{eqn.fibers} is of the form
\begin{eqnarray}\label{eqn.linear}
0= (\bar{\semi}_{ii}-\bar{\semi}_{jj})a_{ij} + \nilp_{ij}' - \sum_{\substack{(i<\ell) \in \inv(w^{-1})\\ \ell<j}} a_{i\ell}\nilp_{\ell j}'.
\end{eqnarray}
If $\semi_{ii}\neq \semi_{jj}$, then~\eqref{eqn.linear} has solution
\[
a_{ij} = (\bar{\semi}_{jj}-\bar{\semi}_{ii})^{-1} \left( \nilp_{i,j}' - \sum_{\substack{(i<\ell) \in \inv(w^{-1})\\ \ell<j}} a_{i\ell}\nilp_{\ell j}' \right).
\]

Now suppose $\semi_{ii}= \semi_{jj}$ and~\eqref{eqn.linear} has at least one nonzero term, namely, there exists $\ell$ with $i\leq \ell<j$ such that $\nilp_{\ell j}'\neq 0$.  Since $\mathsf{x}$ is in HFPJF, all the pivots of $\nilp_{\semi_{ii}}'$ occur in the rightmost columns, so $\nilp'$ and therefore $\nilp$ must have a pivot in column $j$ and row $k$ with $\ell\leq k<j$.  Equivalently, entry $(k,j)$ is a pivot of $\nilp_{\semi_{ii}} = \nilp_{\semi_{kk}}$, and our assumption that $\nilp \in \dot{w}H(\m)\dot{w}^{-1}$ implies $w^{-1}(k)\leq \m(w^{-1}(j))$ so $w^{-1}(k)< w^{-1}(i)$.   Thus $(i<k)\in \inv(w^{-1})$ and $\nilp_{kj}'\neq 0$ so the term $a_{ik}\nilp_{kj}'$ appears in the sum on the RHS of~\eqref{eqn.linear}. This shows that, in the case where $\semi_{ii}=\semi_{jj}$, equation~\eqref{eqn.linear} is not vacuous if and only if there exists $(i<k)\in \inv(w^{-1})$ such that $\nilp_{\semi_{ii}}$ has a pivot in entry $(k,j)$.
Furthermore, in this case, the equation has solution:
\[
a_{ik} = (\nilp_{kj}')^{-1}  \left( \nilp_{i,j}' - \sum_{\substack{(i<\ell) \in \inv(w^{-1})\\ \ell<j, \ell\neq k}} a_{i\ell}\nilp_{\ell j}' \right) .
\]
We have now shown that each equation in the system has a solution, and the system must in fact be consistent since each solution corresponds to either a unique pair $(i<j)\in \inv(w^{-1})$ such that $\semi_{ii}\neq \semi_{jj}$ or a unique pair $(i<k)\in \inv(w^{-1})$ such that $\semi_{ii}=\semi_{kk}$ and entry $(k,j)$ is a pivot of $\nilp_{\semi_{ii}}$ (all pivots must occur in distinct rows).  The number of free variables in the system is equal to the number of inversions $(i<j)$ of $w^{-1}$ such that $\semi_{ii}\neq \semi_{jj}$ and $w^{-1}(i)\leq \m(w^{-1}(j))$ plus
\[
 \left|\left\{ (i<k) \in \inv(w^{-1}) \mid \begin{array}{c} \textup{$\semi_{ii}=\semi_{kk}$ and if $\nilp_{kj}$ is a pivot of } \\ \textup{$\nilp_{\semi_{ii}}$, then $w^{-1}(i)\leq \m(w^{-1}(j)) $} \end{array}  \right\} \right|.
 \]
The proof of the lemma is complete.
\end{proof}

The following implies Theorem~\ref{qcount} and generalizes \eqref{eq_flag_Poinc}, a classical result for the flag variety.


\begin{theorem} Suppose $\mathsf{x}=\semi+\nilp \in \gl_n(\C)$ has integer entries and is in HFPJF and let $\m$ be a Hessenberg vector. Let $p$ be a prime such that no two distinct entries of $\mathsf{x}$ have difference divisible by $p$. Then for each $w \in S_n$ the intersection $C_{w,p}\cap \B_p(\bar{\mathsf{x}}, \bar{H})$ is nonempty if and only if $\mathsf{n}\in \dot w H(\m)\dot w^{-1}$. If nonempty, $C_{w,p}\cap \B_p(\bar{\mathsf{x}}, \bar{H})$ is isomorphic to affine space of dimension $d_{w,\mathsf{x}}^{\m}$. In particular, the cardinality of the Hessenberg variety $\B_p(\bar{\mathsf{x}},\bar{H}(\m)) \subseteq \B_p$ is $\poin(\B(\mathsf{x},H(\m)); p^{1/2})$.
\end{theorem}



\begin{proof} Let $w\in S_n$. Since $\nilp$ is a $0-1$ matrix, $\bar{\nilp}_{ij}\neq 0$ if and only if $\nilp_{ij}\neq 0$.  We therefore have $\nilp \in \dot{w} H(\m)\dot{w}^{-1}$ if and only if $\bar{\nilp}\in \dot{w}\bar{H}(\m)\dot{w}^{-1}$.  Furthermore, for every $g\in \bar{U}_{w}$ we get $g^{-1}\bar{\nilp} g = \bar{\nilp} + \mathsf{y}$ where
\[
\mathsf{y} \in \mathbb{F}_p\{ E_{ij} \mid \textup{there exists } (k<\ell) \textup{ such that } i<k, j>\ell,\; \nilp_{k\ell}\neq 0 \}
\]
Thus we may conclude that $g^{-1}\bar{\nilp} g \in \dot{w}\bar{H}(\m)\dot{w}^{-1}$ implies $\bar{\nilp} \in \dot{w}\bar{H}(\m)\dot{w}^{-1}$ since $\bar{\nilp}$ and $\mathsf{y}$ have disjoint support in $\gl_n(\ff_p)$. This proves that 
\begin{eqnarray}\label{eqn.nonempty}
{C}_{w,p}\cap \B_p(\bar{\mathsf{x}},\bar{H}(\m))\neq \emptyset\Leftrightarrow \bar{\nilp} \in \dot{w}\bar{H}(\m)\dot{w}^{-1}  \Leftrightarrow  \nilp\in \dot{w}H(\m)\dot{w}^{-1}.
\end{eqnarray}
We argue now that any nonempty $C_{w,p}\cap \B_p(\bar{\mathsf{x}},\bar{H}(\m))$ is isomorphic with the affine space $\ff_p^{d_{w,{\mathsf x}}^\m}$.  

We adapt Tymoczko's proof of Theorem~\ref{thm.Tymoczko} to the finite field setting (see~\cite[Theorem 6.1]{Tymoczko2006}). Define 
$$
Z_i:=\{u \in (\bar{U}_{n}\bar{U}_{n-1} \ldots \bar{U}_{i}) \cap \bar{U}_{w}\mid (u^{-1}\bar{\mathsf{x}}u)_{jk}=0\, \mbox{ for all }\, k>j\geq i,\; \m(w^{-1}(k))<w^{-1}(j) \}.
$$
If $u\in (\bar{U}_{n}\bar{U}_{n-1} \ldots \bar{U}_{i})\cap \bar{U}_{w}$ then
\begin{eqnarray}\label{eqn.factorization}
u = gu'\, \mbox{ for unique } \,u'\in \bar{U}_{i}\cap \bar{U}_{w}\, \mbox{ and } g\in (\bar{U}_{n}\bar{U}_{n-1}\dots \bar{U}_{i+1})\cap \bar{U}_{w}.
\end{eqnarray}
Furthermore, since conjugation by an element in $\bar{U}_{i}$ affects only the first $i$ rows of an upper triangular matrix, if $u=gu'\in Z_i$ then $g\in Z_{i+1}$.  Thus the factorization from~\eqref{eqn.factorization} yields a well defined projection $\pi_i: Z_i \to Z_{i+1}$ defined by $\pi_i(u) = g$. 
We obtain a sequence of projections
$$
Z_1 \xrightarrow{\; \pi_1\; }Z_2 \xrightarrow{\; \pi_2 \;} \cdots Z_{n-1} \xrightarrow{\; \pi_{n-1}\;} Z_n = \{I_n\}
$$
such that for each $g\in Z_{i+1}$, 
\begin{eqnarray*}
 \pi_i^{-1}(g) = \{ u\in \bar{U}_{i}\cap \bar{U}_{w} \mid (u^{-1}g^{-1}\bar{\mathsf{x}}gu)_{ik} = 0 \, \mbox{ for all }\, k>i, \m(w^{-1}(k))<w^{-1}(i) \}.
\end{eqnarray*}
By Lemma~\ref{lemma.fiber-count}, the fiber $\pi_i^{-1}(g)$ is isomorphic to affine space of dimension $d_i$. Furthermore, the system of equations from~\eqref{eqn.fibers} defining $\pi^{-1}(g)$ varies continuously in $g$ by conjugation, so $\pi_i$ is a fiber bundle. We have an isomorphism $Z_{i} \to Z_{i+1}\times \mathbb{F}_p^{d_i}$ defined by sending $gu'$ to $(g, v_{u'})$, where $v_{u'}$ is the vector in $\mathbb{F}_p^{d_i}$ determined by the free entries in $u'$ (as defined by the consistent system of equations from~\eqref{eqn.fibers}). Thus, $\pi_i$ is a trivial bundle.

To complete the proof, note that the isomorphism $\bar{U}_{w} \to C_{w,p}$ from~\eqref{eqn.Schubert.cell} restricts to a isomorphism $Z_1\to C_{w,p}\cap \B_p(\bar{\mathsf{x}}, \bar{H}(\m))$. Since $Z_n = \{I_n\}$ and each $\pi_i$ is a trivial bundle, we conclude $C_{w,p}\cap \B_p(\bar{\mathsf{x}},\bar{H}(\m))$ is isomorphic to affine space of dimension $d_{w,\mathsf{x}}^\m=\sum_{i=1}^{n-1}d_i$. This immediately implies $|C_{w,p}\cap \B_p(\bar{\mathsf{x}},\bar{H}(\m))| = p^{d_{w,\mathsf{x}}^{\m}}$, as desired. 
\end{proof}


\section{Proofs of Proposition \ref{eigenspaces} and Theorem \ref{poinpoly}} \label{counting.sec}

Recall that Proposition~\ref{eigenspaces} says that if $\mathsf{y} \in \gl_n(\ff_p)$ fixes exactly $k$ $1$-dimensional subspaces of $\ff_p^n$, then
\begin{equation} \label{eigen.eq}
\left|\B_p(\mathsf{y},\bar{H}(\mm))\right|=[n-2]_p!\left([n]_p[n-2]_p+kp^{n-2}\right).
\end{equation}

To prove~\eqref{eigen.eq} we count full flags $0\subset V_1\subset\ldots\subset V_{n-1}\subset\ff_p^n$ in $\B_p$ satisfying 
\begin{equation} \label{hesscon}
\mathsf{y}V_1\subset V_{n-1}
\end{equation}
using the formulas~\eqref{eqn.proj.count} and~\eqref{eqn.flag.count}.
For each $1$-dimensional subspace $Z\subset\ff_p^n$, we count the number of flags that satisfy $V_1=Z$ and~\eqref{hesscon}.  If $Z$ is an eigenspace for $\mathsf{y}$, then any flag satisfying $V_1=Z$ satisfies \eqref{hesscon} and we may pick $V_i$ for $i\geq 2$ such that $Z\subset V_2\subset\ldots\subset V_{n-1}\subset \ff_p^n$ in any way we want.  In particular, $V_2/Z\subset \ldots\subset V_{n-1}/Z\subset \ff_p^n/Z \simeq \ff_p^{n-1}$ is a complete flag in $\ff_p^{n-1}$ every element of the flag variety of $GL_{n-1}(\ff_p)$ arises in this way.  It follows that the number of flags satisfying \eqref{hesscon} in which $V_1$ is an eigenspace for $\mathsf{y}$ is
\begin{equation} \label{eigen1}
k[n-1]_p!=[n-2]_p!k[n-1]_p.
\end{equation}
If $Z$ is not an eigenspace for $\mathsf{y}$, every full flag satisfying \eqref{hesscon} and $V_1=Z$ is obtained by first picking an $(n-1)$-dimensional subspace $V\subset \ff_p^n$ such that $Z+\mathsf{y}Z \subseteq V$, and then picking $V_i$ $(2 \leq i \leq n-2)$ such that $Z\subset V_2\subset \ldots\subset V_{n-2}\subset V$.  Picking $V$ amounts to choosing a hyperspace in $\ff_p^n/Z+\mathsf{y}Z \simeq \ff_p^{n-2}$, so there are $[n-2]_p$ many choices.  By similar reasoning as in the previous case we get that there are $[n-2]_p!$ ways to pick the $V_i$.
We see now that the number of flags satisfying \eqref{hesscon} in which $V_1$ is not an eigenspace for $\mathsf{y}$ is
\begin{equation} \label{eigen2}
\left([n]_p-k\right)[n-2]_p[n-2]_p!.
\end{equation}
Now (\ref{eigen.eq}) follows from (\ref{eigen1}), (\ref{eigen2}), and straightforward calculation.

Recall also that Theorem \ref{poinpoly} says that if $\mathsf{x} \in \g$ has exactly $\ell$ pairwise distinct eigenvalues $\lambda_1,\ldots \lambda_\ell$ and $\dim_\C\ker(\mathsf{x}-\lambda_jI)=d_j$ for each $j \in [\ell]$, then the Poincar\'e polynomial of $\B(\mathsf{x},H(\mm))$ is
\begin{equation} \label{poinpolyform2}
\poin(\B(\mathsf{x},H(\mm));q)=[n-2]_{q^2}!\left([n]_{q^2}[n-2]_{q^2}+q^{2n-4}\sum_{j=1}^\ell [d_j]_{q^2}\right).
\end{equation}

Given such  $\mathsf{x}$, there exist $\mathsf{x}^\ast \in \g$ in HFPJF and $g \in G$ such that
$$
\mathsf{x}^\ast=g^{-1}\mathsf{x}g
$$
and thus $\B(\mathsf{x},H(\mm))$ and $\B(\mathsf{x}^\ast,H(\mm))$ have the same Poincar\'e polynomial.  
Write $\mathsf{x}^\ast=\semi+\nilp$ with $\semi$ diagonal and $\nilp$ a strictly upper triangular $0-1$ matrix.  
Let $\semi^\prime$ be the diagonal matrix obtained from $\semi$ by replacing each entry $\lambda_i$ with $i$, for all $i \in [\ell]$.  Set $\mathsf{x}^\prime:=\semi^\prime+\nilp$.  So, $\mathsf{x}^\prime$ has integer entries and is in HFPJF and has eigenvalues $1,\ldots,\ell$ with respective multiplicities $d_1,\ldots,d_\ell$.  
By Corollary~\ref{cor.bettinumber}, $\B(\mathsf{x}^\ast,H(\mm))$ and $\B(\mathsf{x}^\prime,H(\mm))$ have the same Poincar\'e polynomial.  Now fix $p>2\ell$ and let $\bar{\mathsf{x}}^\prime  \in \gl_n(\ff_p)$ be as in Theorem \ref{qcount}.  We observe that $\bar{\mathsf{x}}^\prime $ has eigenvalues $1,\ldots,\ell \in \ff_p$ with respective multiplicities $d_1,\ldots,d_\ell$.  It follows that there are exactly $\sum_{j=1}^\ell [d_j]_p$ many $1$-dimensional subspaces of $\ff_p$ fixed by $\bar{\mathsf{x}}^\prime $.  Now \eqref{poinpolyform2} follows from Theorem \ref{qcount} and Proposition \ref{eigenspaces}.


\section{Proof of Theorem \ref{nilpotent}} \label{singular.sec}

Our aim is to show that if $\mathsf{x} \in \g$ is nilpotent then the singular locus of $\B(\mathsf{x},H(\mm))$ is $\B(\mathsf{x},H((1,n-1,\ldots,n-1,n)))$.  We begin by restating basic facts that were presented in the introduction.  The unipotent radical $U_-$ of the Borel subgroup $B_-$ opposite to $B$ consists of all lower triangular matrices with diagonal entries equal to $1$.  As a variety, $U_-$ is an affine space with coordinates $z_{ji}$, $1 \leq i<j \leq n$.  

We observe that $U_-B/B$ is open in $\B$, as its complement consists of those $gB$ such that some minor of $g$ obtained from a northwest justified submatrix is zero.  It follows that for a subvariety ${\mathcal V} \subseteq \B$ and $gB \in {\mathcal V}$, the intersection $gU_-B/B \cap {\mathcal V}$ is an open neighborhood of $gB$ in ${\mathcal V}$.  With this in mind, given $\mathsf{x} \in \g$ we define as in the Introduction
$$
\NN_{g,\mathsf{x}}^\m:=gU_-B/B \cap \B(\mathsf{x},H(\m))
$$
for each $gB \in \B(\mathsf{x},H(\m))$.  
Since smoothness is a local property we have that given $\mathsf{x} \in \g$, the Hessenberg variety $\B(\mathsf{x},H(\m))$ is smooth at a point $gB$ if and only if the quasiprojective variety $\NN_{g,\mathsf{x}}^\m$ is smooth at $gB$.

We aim now to understand $\NN_{g,\mathsf{x}}^{\mm}$.  Given the set $$\zzz:=\{z_{ji}\mid1 \leq i<j \leq n\}$$ of 
variables, we define $u \in M_{n,n}(\C[\zzz])$ by
\begin{eqnarray}\label{eqn.u}
u:=\lb u_1 | u_2| \ldots | u_n \rb
\end{eqnarray}
with columns
$$
u_i=e_i+\sum_{j=i+1}^n z_{ji}e_j
$$
for each $i\in [n]$.  Any matrix $a=[a_1|a_2|\ldots|a_n] \in U_-$ is obtained from $u$ by substituting some $a_{ji} \in \C$ for each $z_{ji}$. 

Assume that $gB \in \B(\mathsf{x},H(\mm))$.  Given $a \in U_-$ as above,  consideration of the flag model for $\B$ yields quickly that
$$
gaB \in \B(\mathsf{x},H(\mm)) \mbox{ if and only if } \mathsf{x}ga_1 \in \C\{ga_i \mid i \in [n-1]\}.
$$
Equivalently,
$$
gaB \in \B(\mathsf{x},H(\mm)) \mbox{ if and only if } \det([\mathsf{x}ga_1|ga_1|\ldots |ga_{n-1}])=0.
$$
We define
\begin{eqnarray}\label{eqn.Ag}
A_g:=\lb \mathsf{x}gu_1|gu_1 | \ldots|gu_{n-1}\rb \in M_{n,n}(\C[\zzz])
\end{eqnarray}
and observe that $\det(A_g)\in \C[\mathbf{z}]$.  Moreover, $\det(A_g)=0$ if and only if ${\mathsf x}$ is a scalar matrix.  Our identification of $gU_-B/B$ with affine space yields the following.

\begin{lemma} \label{patchhyper}
Assume ${\mathsf x}$ is not a scalar matrix.  If $gB \in \B(\mathsf{x},H(\mm))$ then there is an isomorphism of varieties from $\NN_{g,\mathsf{x}}^{\mm}$ to the affine hypersurface ${\mathcal V}(\det(A_g))$, which sends $gB$ to the zero vector.  Therefore, $gB$ is a smooth point in $\B(\mathsf{x},H(\mm))$ if and only if $(0,\ldots,0)$ is a smooth point in ${\mathcal V}(\det(A_g))$.
\end{lemma}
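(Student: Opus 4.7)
The plan is to produce the isomorphism directly from the standard coordinatization of the patch. First I would identify $gU_-B/B$ with the affine space $\C^{\binom{n}{2}}$ via the map $u\mapsto guB$ composed with the coordinates $\{z_{ji}\}_{1\leq i<j\leq n}$ on $U_-$; under this identification, $gB$ corresponds to $I_n\in U_-$, that is, to the origin.

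Next I would rewrite the Hessenberg condition as a polynomial equation in the $z_{ji}$. Using the flag model, the flag determined by $guB$ lies in $\B(\mathsf{x},H(\mm))$ iff $\mathsf{x}V_i\subseteq V_{\mm(i)}$ for all $i$, where $V_i=\C\{gu_1,\ldots,gu_i\}$. Because $\mm(1)=n-1$ and $\mm(i)=n$ for $i\geq 2$, only the condition $\mathsf{x}gu_1\in\C\{gu_1,\ldots,gu_{n-1}\}$ is nontrivial, and it is in turn equivalent to $\det(A_g)=0$ with $A_g$ as in~\eqref{eqn.Ag}. Evaluating $\det(A_g)\in\C[\zzz]$ at a point $a\in U_-$ recovers exactly the scalar $\det([\mathsf{x}ga_1|ga_1|\cdots|ga_{n-1}])$, so $\NN_{g,\mathsf{x}}^{\mm}$ and $\mathcal{V}(\det(A_g))$ coincide as closed subvarieties of $\C^{\binom{n}{2}}$ under the coordinate isomorphism. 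This yields the isomorphism claimed in the lemma, sending $gB$ to the origin, and the smoothness assertion is then immediate because smoothness at a point is preserved under isomorphism of varieties.

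I anticipate no real obstacle. The only step that needs a line of justification is that the flag-model description of $\B(\mathsf{x},H(\mm))$ agrees with the coset-model description used in the definition of $\NN_{g,\mathsf{x}}^{\mm}$; this compatibility is already noted in Section~\ref{prelim} and rests on~\cite[Theorem~10]{ITW2020}. Everything else reduces to bookkeeping with the coordinates $z_{ji}$ and checking that the symbolic determinant $\det(A_g)$ specializes correctly at each $a\in U_-$.
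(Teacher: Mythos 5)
Your proposal is correct and follows essentially the same route as the paper: the paper likewise identifies $gU_-B/B$ with $\C^{\binom{n}{2}}$ via the coordinates $z_{ji}$ on $U_-$, observes that the only nontrivial condition imposed by $H(\mm)$ is $\mathsf{x}ga_1 \in \C\{ga_1,\ldots,ga_{n-1}\}$, i.e. the vanishing of $\det([\mathsf{x}ga_1|ga_1|\cdots|ga_{n-1}])$, and then reads off the lemma from the specialization of the symbolic determinant $\det(A_g)$. No gaps.
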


Now we prove Theorem \ref{nilpotent}.

\begin{proof}[Proof of Theorem~\ref{nilpotent}]
Fix $gB \in \B(\mathsf{x},H(\mm))$. By Lemma~\ref{patchhyper} and the Jacobian criterion for smoothness, $gB$ is a smooth point if and only if $\det(A_g)$ has a nonzero linear term.  Write
$$
g=[v_1|\ldots |v_n].
$$
So, $v_i=ge_i$ for all $i \in [n]$.  Now, for each $i$, set
$$
v_i^\ast:=\sum_{j=i+1}^nz_{ji}v_j \in M_{n,1}(\C[\zzz]).
$$
We observe that
$$
A_g=[\mathsf{x}v_1+\mathsf{x}v_1^\ast | v_1+v_1^\ast | \ldots | v_{n-1}+v_{n-1}^\ast].
$$ 
Using multilinearity, we write $\det(A_g)$ as the sum of $2^n$ terms of the form
\begin{equation} \label{multi}
\det([t_0 | t_1 | \ldots | t_{n-1}]),
\end{equation}
where $t_0 \in \{\mathsf{x}v_1,\mathsf{x}v_1^\ast\}$ and $t_i \in \{v_i,v_i^\ast\}$ for each $i \in [n-1]$.  If there exist $i,j \in [n-1]$ such that $t_i=v_i^\ast$ and $t_j=v_j^\ast$ then the determinant in (\ref{multi}) has no linear term.  
The same holds if $t_0=\mathsf{x}v_1^\ast$ and there is some $i \in [n-1]$ with $t_i=v_i^\ast$.  (Indeed, in both these cases, the matrix $[t_0 | t_1 | \ldots | t_{n-1}]$ contains two columns whose entries are polynomials in $\C[\zzz]$ with no constant term.)
If $t_0=\mathsf{x}v_1$ and $t_i=v_i$ for all $i \in [n-1]$ we have that $\det([t_0 | t_1 | \ldots | t_{n-1}])\in \C$. 
The remaining terms from (\ref{multi}) are
$$
D_0:=\det([\mathsf{x}v_1^\ast | v_1 | v_2 | \ldots | v_{n-1}])
$$
and, for each $i \in [n-1]$,
$$
D_i:=\det([\mathsf{x}v_1 | v_1 | \ldots | v_{i-1} | v_i^\ast | v_{i+1} | \ldots | v_{n-1}]).
$$
Using multilinearity again, we see that
\begin{equation} \label{d0}
D_0=\sum_{j=2}^n z_{j1} \det([\mathsf{x}v_j|v_1| \ldots | v_{n-1}]),
\end{equation}
and
\begin{equation} \label{di}
D_i=z_{ni}\det([\mathsf{x}v_1|v_1| \ldots | v_{i-1} | v_n | v_{i+1} | \ldots | v_{n-1}])
\end{equation}
for each $i \in [n-1]$.

Keeping in mind the bijection from cosets to flags mentioned in Section~\ref{prelim}, we see that $gB \in \B(\mathsf{x},H(\mm))$ implies $\mathsf{x}v_1 \in \C\{v_i \mid i \in [n-1]\}$.  Write
$$
\mathsf{x}v_1=\sum_{k=1}^{n-1} \gamma_k v_k,
$$
with each $\gamma_i \in \C$.  We observe that, for each $i \in [n-1]$, $\gamma_i \neq 0$ if and only if $D_i \neq 0$. 
Similarly, $D_0=0$ if and only if $\mathsf{x}v_j \in \C\{v_i\mid i \in [n-1]\}$ whenever $2 \leq j \leq n$. 
The only variable that appears more than once in the $D_j$'s, $j\in \{0,1,\ldots, n-1\}$, is $z_{n1}$, which appears in both $D_0$ and~$D_1$.  We conclude that $\det(A_g)$ contains a linear term whenever $D_i\neq 0$ for some $2\leq i\leq n-1$ or whenever $\det([\mathsf{x}v_i|v_1| \ldots | v_{n-1}])\neq 0$ for some $2\leq i\leq n-1$. 

To prove the first statement of the theorem, assume that $gB$ is singular. Since $\det(A_g)$ has no linear term,  we have $D_i=0$ for all $2\le i \le n-1$ and it follows that 
\begin{equation}\label{eq_thm5_1}
\mathsf{x}v_1\in \C\{v_1\}.
\end{equation}
We also have that $\det([\mathsf{x}v_i|v_1| \ldots | v_{n-1}])=0$ for all $2\le i \le n-1$ which, combined with \eqref{eq_thm5_1}, implies that for all $i\in[n-1]$,
\begin{equation}\label{eq_thm5_2}
\mathsf{x}(\C\{v_j \mid j\in[i]\})\subseteq \C\{v_k \mid k \in [n-1]\}.
\end{equation}
The bijection from cosets to flags gives us that $gB\in \B(\mathsf{x},H((1,n-1,\ldots,n-1,n)))$ if and only if \eqref{eq_thm5_1} and \eqref{eq_thm5_2} hold. We conclude that $gB\in \B(\mathsf{x},H((1,n-1,\ldots,n-1,n)))$.

To prove the second statement of the theorem, assume that $\sf x$ is nilpotent and $gB\in \B(\mathsf{x},H((1,n-1,\ldots,n-1,n)))$.
Since  \eqref{eq_thm5_1} holds and the only eigenvalue of $\mathsf{x}$ is zero, $\mathsf{x}v_1 = 0$. 
It follows that for each $i\in[n-1]$, $\gamma_i=0$ and thus $D_i=0$.
Equation~\eqref{eq_thm5_2} must also hold, so the hyperplane $\C\{v_i \mid i \in [n-1]\}$ is $\mathsf{x}$-stable.  As $\mathsf{x}$ is nilpotent, we have $\mathsf{x}v_n \in \C\{v_i\mid i \in [n-1]\}$ by Remark~\ref{nilpotent.image}.  We conclude $D_0=0$ and $\det(A_g)$ has no linear terms. This concludes the proof of the second statement.
\end{proof}

\begin{example}
Consider $\B(\mathsf{x},H(\mm))$ with $\mathsf{x}=\mathrm{diag}(1,0,0)\in \mathfrak{gl}_3(\C)$.
Let $g$ be the identity matrix and note that $gB$ is a smooth point of this Hessenberg variety since
	$$
	\det(A_g)=\det\begin{bmatrix} 1&1&0\\ 0&z_{21}&1\\ 0&z_{31}&z_{32}
	\end{bmatrix}=z_{21}z_{32}-z_{31}
	$$
has a nonzero linear term.
Since $gB\in\B(\mathsf{x},H((1,n-1,\ldots,n-1,n))) = \B(\mathsf{x}, H((1,2,3)))$ we see that the nilpotent hypothesis is a necessary assumption in the statement of Theorem~\ref{nilpotent}.
\end{example}


\section{Proof of Theorem~\ref{irreducible}} \label{reducible.sec}

Here we prove Theorem~\ref{irreducible}, which characterizes those $\mathsf{x}$ for which $\B(\mathsf{x},H(\mm))$ is irreducible.   
We continue using the notation appearing in the previous section.
It is known (see for example~\cite{ADGH}) that the flag variety $\B$ has open cover
$$
\B=\bigcup_{w \in W} \dot{w}U_-B/B.
$$
We therefore obtain an open cover,
\begin{equation} \label{unionhess}
\B(\mathsf{x},H(\mm))=\bigcup_{w \in W} \NN_{\dot{w},\mathsf{x}}^{\mm}
\end{equation}
of $\B(\mathsf{x}, H(\mm))$.

By the Principal Ideal Theorem~\cite[Theorem 10.1]{Eis}, every affine hypersurface in  $\C^{{n} \choose {2}}$ is equidimensional of codimension one. The Hessenberg variety $\B(\mathsf{x},H(\mm))$ is covered by open neighborhoods, each of which is isomorphic to an affine hypersurface in  $\C^{{n} \choose {2}}$ by Lemma 5.1. Thus $\B(\mathsf{x},H(\mm))$ is equidimensional.



We conclude that $\B(\mathsf{x},H(\mm))$ is reducible if and only if the degree $$2\dim \B(\mathsf{x}, H(\mm)) =  n^2-n-2$$ polynomial $\poin(\B(\mathsf{x},H(\mm));q)$ is not monic. 
Recall from Theorem \ref{poinpoly} that if $\mathsf{x}$ has eigenvalues $\lambda_1,\ldots,\lambda_\ell$ with respective multiplicities $d_1,\ldots,d_\ell$, then
$$
\poin(\B(\mathsf{x},H(\mm));q)=[n-2]_{q^2}!\left([n]_{q^2}[n-2]_{q^2}+q^{2n-4}\sum_{j=1}^\ell [d_j]_{q^2}\right).
$$
The polynomial $[n-2]_{q^2}![n]_{q^2}[n-2]_{q^2}$ is monic of degree $n^2-n-2$, and $[n-2]_{q^2}!q^{2n-4}$ has degree $n^2-3n+2$.  It follows that $\poin(\B(\mathsf{x},H(\mm));q)$ is not monic if and only if
\begin{equation} \label{eigendims}
\max_{j \in [\ell]} (d_j-1)=n-2.
\end{equation}
We observe that (\ref{eigendims}) holds exactly when $x-\lambda_j I$ has rank one for some $j$, and Theorem~\ref{irreducible} follows.


\section{Proof of Corollary~\ref{schubert}} \label{schubert.sec}

Recall that the simple reflections $s_2$ and $s_{n-2}$ act, respectively, as the transpositions $(2,3)$ and $(n-2,n-1)$ in the natural permutation action of $S_n$.  The longest element $w_0 \in S_n$ maps each $i \in [n]$ to $n+1-i$.  Our aim is to show that if $i \in \{2,n-2\}$ then no Hessenberg variety in $\B$ is isomorphic to the Schubert variety $X_{s_iw_0}$.  We know that $\dim X_{s_iw_0}={{n} \choose {2}}-1$.  Lemma 5.5 of \cite{EPS} says that if $\mathsf{x} \in \g$ and Hessenberg space $H \subseteq \g$ satisfy $\dim \B(\mathsf{x},H)={{n} \choose {2}}-1$, then at least one of 
\begin{itemize}
\item $H=H(\mm)$ or
\item $\mathsf{x}-\lambda I_n$ has rank one for some $\lambda \in \C$
\end{itemize}
must hold.  It follows immediately from \cite[Lemma 5.9]{EPS} that if $i \in \{2,n-2\}$ and $\mathsf{x}-\lambda I_n$ has rank one for some $\lambda \in \C$, then there is no Hessenberg space $H \subseteq \g$ such that $X_{s_iw_0}$ and $\B(\mathsf{x},H)$ are isomorphic.  So, it remains to show that there do not exist $\mathsf{x} \in \g$ and $i \in \{2,n-2\}$ such that $B(x,H(\mm))$ and $X_{s_iw_0}$ are isomorphic. The next result is our first step toward this goal.

\begin{proposition} \label{samepp}
If $i \in \{2,n-2\}$, $\mathsf{x} \in \g$ and $\poin(\B(\mathsf{x},H(\mm));q)=\poin(X_{s_iw_0};q)$, then there is some $\lambda \in \C$ such that $\mathsf{x}-\lambda I_n$ is nilpotent of rank two.
\end{proposition}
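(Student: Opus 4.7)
The plan is to compute $\poin(X_{s_iw_0};q)$ explicitly, equate it with the formula of Theorem~\ref{poinpoly}, and extract the required constraints on the eigenvalue structure of $\mathsf{x}$.

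First I would use the Bruhat decomposition $\poin(X_{s_iw_0};q) = \sum_{v \leq_\br s_iw_0} q^{2\ell(v)}$ together with the fact that right-multiplication by $w_0$ reverses Bruhat order. This converts the condition $v \leq_\br s_iw_0$ into $vw_0 \geq_\br s_i$, which fails exactly when $vw_0$ lies in the parabolic subgroup $W_i := S_{\{1,\ldots,i\}} \times S_{\{i+1,\ldots,n\}}$ (the elements having a reduced expression that avoids $s_i$). Using $\ell(\sigma w_0) = \binom{n}{2} - \ell(\sigma)$ and the identity $[k]_{q^{-2}}! = q^{-k(k-1)}[k]_{q^2}!$ to evaluate the sum over $W_iw_0$, I would obtain
$$
\poin(X_{s_iw_0};q) = [n]_{q^2}! - q^{2i(n-i)}[i]_{q^2}![n-i]_{q^2}!.
$$

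For $i \in \{2, n-2\}$ this equals $[n-2]_{q^2}!\bigl([n-1]_{q^2}[n]_{q^2} - q^{4n-8}(1+q^2)\bigr)$. Equating this to the expression $[n-2]_{q^2}!\bigl([n]_{q^2}[n-2]_{q^2} + q^{2n-4}\sum_{j=1}^\ell [d_j]_{q^2}\bigr)$ supplied by Theorem~\ref{poinpoly}, canceling the common factor $[n-2]_{q^2}!$, and applying the elementary identity $[n-1]_{q^2} - [n-2]_{q^2} = q^{2(n-2)}$, the comparison collapses to the single polynomial equation
$$
\sum_{j=1}^\ell [d_j]_{q^2} = [n-2]_{q^2}.
$$

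Comparing constant terms of this identity forces $\ell = 1$, so $\mathsf{x}$ has a unique eigenvalue $\lambda$; then the identity itself reduces to $[d_1]_{q^2} = [n-2]_{q^2}$, giving $d_1 = n-2$. Consequently $\mathsf{x} - \lambda I_n$ has nullity $n-2$, hence rank two, and since its sole eigenvalue is zero it is nilpotent. The only nontrivial step is the Bruhat-theoretic identification of $\{v : v \not\leq_\br s_iw_0\}$ with $W_iw_0$; once that is established, the remainder is a routine manipulation of $q$-analogs.
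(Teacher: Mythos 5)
Your proposal is correct and follows essentially the same route as the paper: equate the two Poincar\'e polynomials, cancel $[n-2]_{q^2}!$, reduce to $\sum_j [d_j]_{q^2}=[n-2]_{q^2}$, and read off $\ell=1$, $d_1=n-2$ from constant terms. The only difference is that you derive $\poin(X_{s_iw_0};q)=[n]_{q^2}!-q^{2i(n-i)}[i]_{q^2}![n-i]_{q^2}!$ from scratch via the (correct) identification of $\{v: v\not\leq_\br s_iw_0\}$ with $W_iw_0$, whereas the paper simply cites this formula from \cite[Example 5.2]{EPS}.
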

\begin{proof} It is shown in~\cite[Example 5.2]{EPS} that
\begin{equation} \label{schupp}
\poin(X_{s_2w_0};q^{1/2})=\poin(X_{s_{n-2}w_0};q^{1/2})=[n-2]_q!\left([n]_q[n-1]_q-q^{2n-3}-q^{2n-4}\right).
\end{equation}

Combining (\ref{schupp}) and Theorem \ref{poinpoly}, we see that if $\mathsf{x} \in \g$ has eigenvalues $\lambda_1,\ldots,\lambda_\ell$ with respective multiplicities $d_1,\ldots,d_\ell$ and $i \in \{2,n-2\}$ with $\poin(\B(\mathsf{x},H(\mm);q))=\poin(X_{s_iw_0};q)$ then
$$
[n]_q[n-2]_q+q^{n-2}\sum_{j=1}^\ell [d_j]_q = [n]_q[n-1]_q-q^{2n-3}-q^{2n-4},
$$
from which it follows that
$$
q^{n-1}+q^{n-2}+\sum_{j=1}^\ell[d_j]_q = [n]_q.
$$
We see now that if $\poin(\B(\mathsf{x},H(\mm));q)=\poin(X_{s_iw_0};q)$ under the given conditions on $\mathsf{x}$ and $i$, then $\ell=1$ and $d_1=n-2$. 
\end{proof}

Since $\B(\mathsf{x},H(\mm))=\B(\mathsf{x}-\lambda I_n,H(\mm))$ for every $\mathsf{x} \in \g$ and every $\lambda \in \C$, it suffices now to show that if $\mathsf{x} \in \g$ is nilpotent of rank two and $i \in \{2,n-2\}$, then $\B(\mathsf{x},H(\mm))$ and $X_{s_iw_0}$ are not isomorphic.  We compare the Euler characteristics of the singular loci of the varieties in question, examining the Schubert varieties first.  The next result was conjectured by Lakshmibai and Sandhya in \cite{LS} and proved, independently, by Manivel in \cite{Man}, by Kassel-Lascoux-Reutenauer in \cite{KLR}, and by Billey-Warrington in \cite{BW}.

\begin{lemma} \label{lsconjecture}
Given $w \in S_n$, let $Z_w$ consist of all $v \in S_n$ satisfying either
\begin{enumerate}
\item There exist $1 \leq i<j<k<\ell \leq n$ and $1 \leq i^\prime<j^\prime<k^\prime<\ell^\prime \leq n$ such that
\begin{enumerate}
\item $w_k<w_\ell<w_i<w_j$;
\item $v_{i^\prime}=w_k$, $v_{j^\prime}=w_i$, $v_{k^\prime}=w_\ell$, $v_{\ell^\prime}=w_j$; and
\item if $v^\prime$ is obtained from $w$ be replacing $w_i,w_j,w_k,w_\ell$ with $w_k,w_i,w_\ell,w_j$, respectively, and $w^\prime$ is obtained from $v$ by replacing $v_{i^\prime},v_{j^\prime},v_{k^\prime},v_{\ell^\prime}$ with $v_{j^\prime},v_{\ell^\prime},v_{i^\prime},v_{k^\prime}$, respectively, then
\begin{equation} \label{pat1}
v^\prime \leq v \leq w^\prime \leq w
\end{equation}
\end{enumerate}
in Bruhat order, or
 \item There exist $1 \leq i<j<k<\ell \leq n$ and $1 \leq i^\prime<j^\prime<k^\prime<\ell^\prime \leq n$ such that
\begin{enumerate}
\item $w_\ell<w_j<w_k<w_i$;
\item $v_{i^\prime}=w_j$, $v_{j^\prime}=w_\ell$, $v_{k^\prime}=w_i$, $v_{\ell^\prime}=w_k$; and
\item if $v^\prime$ is obtained from $w$ be replacing $w_i,w_j,w_k,w_\ell$ with $w_j,w_\ell,w_i,w_k$, respectively, and $w^\prime$ is obtained from $v$ by replacing $v_{i^\prime},v_{j^\prime},v_{k^\prime},v_{\ell^\prime}$ with $v_{k^\prime},v_{i^\prime},v_{\ell^\prime},v_{j^\prime}$, respectively, then
\begin{equation} \label{pat2}
v^\prime \leq v \leq w^\prime \leq w
\end{equation}
\end{enumerate}
in Bruhat order.
\end{enumerate}
Let $Z_{w, \mathrm{max}}$ be the set of Bruhat maximal elements of $Z_w$.  Then the singular locus of $X_w$ is $\bigcup_{v \in Z_{w, \mathrm{max}}}X_v$.
\end{lemma}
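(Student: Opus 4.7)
The plan is to prove Lemma~\ref{lsconjecture} by combining three ingredients: a local smoothness criterion at torus-fixed points, the Lakshmibai--Sandhya pattern characterization of smoothness, and a combinatorial analysis of Bruhat order that identifies the maximal singular Schubert subvarieties.

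First, I would reduce the problem to a local analysis at $T$-fixed points. The singular locus $\mathrm{Sing}(X_w)$ is a closed $B$-invariant subset of $X_w$; since $B$ acts with finitely many orbits (namely the Schubert cells $C_v$ for $v \leq_\br w$), this forces $\mathrm{Sing}(X_w)$ to be a union of Schubert varieties. Thus it suffices to determine those $v \leq_\br w$ at which $X_w$ fails to be smooth at $\dot v B$ and then extract the Bruhat-maximal ones. The local criterion I would apply is the tangent space formula, often attributed to Polo, which states that $X_w$ is smooth at $\dot vB$ if and only if $\ell(w)$ equals the cardinality of $\{t : v \leq_\br tv \leq_\br w\}$, where $t$ ranges over reflections in $S_n$.

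Second, I would translate the failure of this equality into the pattern conditions of the lemma. The Lakshmibai--Sandhya theorem says $X_w$ is smooth if and only if $w$ avoids both $3412$ and $4231$, so every occurrence of such a pattern in $w$ at positions $i<j<k<\ell$ forces $X_w$ to be singular along some Schubert subvariety $X_v$ with $v \leq_\br w$. Using the tableau criterion (Theorem~\ref{tc}), I would show that the Bruhat-maximal such $v$ are exactly those described in conditions (1) and (2). The auxiliary permutations $v'$ and $w'$ in the statement play the role of essential combinatorial witnesses: $v'$ encodes how the pattern positions of $w$ embed into $v$, while $w'$ encodes how the witness positions of $v$ embed into $w$. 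The chains \eqref{pat1} and \eqref{pat2} simultaneously enforce $v \leq_\br w$ (so that $\dot vB \in X_w$) and guarantee that the reflections associated to the pattern contribute an excess to the tangent space dimension at $\dot vB$.

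Third, I would verify maximality. Given $v \in Z_w$ and any $v'' >_\br v$ with $v'' \leq_\br w$, either $v'' \in Z_w$ (in which case one passes to $v''$) or the four-element witness breaks; in the latter case one checks via the tangent space formula that $X_w$ is already smooth at $\dot{v}''B$. This, combined with the Schubert-union structure of $\mathrm{Sing}(X_w)$ from the first step, yields the claimed equality $\mathrm{Sing}(X_w) = \bigcup_{v \in Z_{w,\mathrm{max}}} X_v$.

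The principal obstacle will be the combinatorial bookkeeping in the second step: enumerating exactly which reflections $t$ satisfy $v \leq_\br tv \leq_\br w$ and matching them with the four-position configurations in $w$ and $v$. The matching is delicate because reflections can interact with pattern positions in subtle ways, but the inequalities \eqref{pat1} and \eqref{pat2} are engineered precisely to isolate the reflections that produce an excess in the tangent space dimension at $\dot vB$, and working through both $3412$ and $4231$ cases symmetrically (one is essentially the image of the other under $w \mapsto w_0 w w_0$) cuts the casework roughly in half.
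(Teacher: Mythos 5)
You should first be aware that the paper does not prove this lemma at all: it is quoted as an external result, namely the Lakshmibai--Sandhya conjecture on maximal singular loci of type $A$ Schubert varieties, proved independently by Manivel, by Kassel--Lascoux--Reutenauer, and by Billey--Warrington. Each of those proofs is a substantial paper in its own right, so any self-contained proof here would have to carry real weight.

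Measured against that, your proposal has a genuine gap: it is an outline whose entire mathematical content is deferred. Your first step (the singular locus is closed and $B$-stable, hence a union of Schubert varieties, so one only needs to test smoothness at the points $\dot v B$) is correct and standard, and the tangent-space criterion is the right tool --- though the criterion should read $\dim T_{\dot v B}X_w=\#\{t : tv\leq_\br w\}$ over all transpositions $t$, not the cardinality of $\{t: v\leq_\br tv\leq_\br w\}$ as you state it. But the second and third steps, where you ``show that the Bruhat-maximal such $v$ are exactly those described in conditions (1) and (2)'' and ``verify maximality,'' are precisely where the whole theorem lives, and you give no argument for either direction: neither that every $v\in Z_w$ yields an excess of transpositions $t$ with $tv\leq_\br w$, nor (much harder) that every $v\leq_\br w$ at which the tangent space jumps lies below some element of $Z_{w,\mathrm{max}}$. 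Declaring this ``combinatorial bookkeeping'' understates it badly. In addition, your proposed symmetry is false: under $w\mapsto w_0ww_0$ both patterns $3412$ (case (1)) and $4231$ (case (2)) are sent to themselves, not to each other, and in all published proofs the $4231$ case requires a genuinely different and more delicate analysis than the $3412$ case. As written, the proposal cannot be accepted as a proof; the appropriate fix in the context of this paper is simply to cite the theorem, as the authors do.
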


\begin{proposition} \label{schubertsingular}
Define $v(2)$ and $v(n-2)$ in $S_n$ by
$$
v(2)_i:=\left\{ \begin{array}{ll} n+1-i & 1 \leq i \leq n-4 \\ 2 & i=n-3 \\ 1 & i=n-2 \\ 4 & i=n-1 \\ 3 & i=n \end{array} \right.
$$
and
$$
v(n-2)_i:=\left\{ \begin{array}{ll} n-2 & i=1 \\ n-3 & i=2 \\ n & i=3 \\ n-1 & i=4 \\ n+1-i & 5 \leq i \leq n. \end{array} \right.
$$
Then
\begin{enumerate}
\item the singular locus of $X_{s_2w_0}$ is $X_{v(2)}$, and
\item the singular locus of $X_{s_{n-2}w_0}$ is $X_{v(n-2)}$.
\end{enumerate}
\end{proposition}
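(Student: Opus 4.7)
The strategy is to apply Lemma~\ref{lsconjecture} to $w=s_2w_0$, and to deduce statement~(2) from~(1) via the Bruhat-order-preserving involution $\sigma\mapsto w_0\sigma w_0$ of $S_n$.  One verifies directly in one-line notation that this involution sends $s_2w_0$ to $s_{n-2}w_0$ and $v(2)$ to $v(n-2)$.  Since it is induced by an isomorphism of $\B$ that sends Schubert varieties to Schubert varieties and singular loci to singular loci, statements~(1) and~(2) are equivalent, so it suffices to treat~(1).

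For~(1), write $w=s_2w_0=[n,n-1,\ldots,4,2,3,1]$.  The permutation $w$ has a unique ascent, at positions $(n-2,n-1)$ with values $(2,3)$.  A $3412$-pattern requires two disjoint ascending pairs, so $w$ has no $3412$-pattern.  For any $4231$-pattern, the ``middle'' ascending pair $(j,k)$ must equal $(n-2,n-1)$, forcing $\ell=n$ and $i\in[n-3]$; thus the $4231$-patterns of $w$ are exactly the tuples $(i,n-2,n-1,n)$ with values $(n+1-i,2,3,1)$ for $i\in[n-3]$.  To check that $v(2)\in Z_w$, I apply Lemma~\ref{lsconjecture}(2) with pattern parameter $i=n-3$ and the choice $(i',j',k',\ell')=(n-3,n-2,n-1,n)$, noting that $v(2)_{i'}=2$, $v(2)_{j'}=1$, $v(2)_{k'}=4$, $v(2)_{\ell'}=3$, matching $(w_j,w_\ell,w_i,w_k)$.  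The recipes of part~(2)(c) yield $v'=v(2)$ and $w'=w$, so the chain $v'\leq v\leq w'\leq w$ collapses to the single inequality $v(2)\leq_\br s_2w_0$, which follows directly from the tableau criterion (Theorem~\ref{tc}): the only prefixes that differ are at $q\in\{n-3,n-2,n-1\}$, where one checks pointwise domination of sorted prefixes by comparing $(2,5,6,\ldots,n)$, $(1,2,5,\ldots,n)$, $(1,2,4,5,\ldots,n)$ against the corresponding prefixes of $w$.

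The main obstacle is to show that $v(2)$ is the unique Bruhat-maximal element of $Z_{s_2w_0}$.  My plan has two parts.  First, for each $i\in[n-3]$ I compute the canonical candidate $v'_i$ produced by Lemma~\ref{lsconjecture}(2)(c) from the pattern $(i,n-2,n-1,n)$, obtained from $w$ by replacing the values $(n+1-i,2,3,1)$ at positions $(i,n-2,n-1,n)$ by $(2,1,n+1-i,3)$, and verify via the tableau criterion that $v'_1<_\br v'_2<_\br\cdots<_\br v'_{n-3}=v(2)$; the key point is that increasing $i$ moves the value $n+1-i$ to a later position in $v'_i$ while placing a larger value at the earlier position $i$, which weakly increases each sorted prefix.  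Second, I will argue that every $v\in Z_{s_2w_0}$ is Bruhat-dominated by some $v'_i$, and hence by $v(2)$.  The value-position constraints $(v_{i'},v_{j'},v_{k'},v_{\ell'})=(2,1,n+1-i,3)$, together with the Bruhat-chain condition $v\leq_\br w'\leq_\br w$, restrict where values $\ge 4$ can sit in $v$; a case analysis on the positions $(i',j',k',\ell')$ in $[n]$ then pins down enough sorted prefixes of $v$ to force the required domination.  I expect this second part to be the most delicate step, as it requires translating the combinatorial constraints into pointwise inequalities of sorted prefixes.  Once maximality is established, Lemma~\ref{lsconjecture} yields $\mathrm{Sing}(X_{s_2w_0})=X_{v(2)}$, which together with the symmetry argument in the first paragraph completes the proof.
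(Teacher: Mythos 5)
Your symmetry reduction of (2) to (1) via $\sigma\mapsto w_0\sigma w_0$ is a legitimate shortcut (the paper instead runs the argument for $s_{n-2}w_0$ directly with the pattern $(1,2,3,4)$), and your identification of the patterns of $w=s_2w_0$ and the verification that $v(2)\in Z_w$ match the paper's. The gap is in the maximality step, which is the heart of the proposition and which you leave as an unexecuted plan. Worse, the plan as stated points in the wrong direction: for $v\in Z_w$ with pattern index $i$, Lemma~\ref{lsconjecture}(2)(c) gives $v'_i\leq_\br v$, i.e.\ the canonical candidate $v'_i$ is a \emph{lower} bound for $v$, not an upper bound. So "every $v\in Z_w$ is Bruhat-dominated by some $v'_i$" is not something the chain $v'_1<_\br\cdots<_\br v'_{n-3}=v(2)$ helps with at all; that chain is a detour, and the claim you actually need is simply $v\leq_\br v(2)$ for every $v\in Z_w$, proved from scratch. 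You also gesture at using $v\leq_\br w'\leq_\br w$ to constrain $v$, but $w>v(2)$ in Bruhat order, so that inequality alone cannot force $v\leq_\br v(2)$; the extra leverage must come from the \emph{lower} bound $v'\leq_\br v$ together with the value-position constraints (b).

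Concretely, the paper closes this gap as follows. Since $v'_{j'}$ plays no role but $v'$ has $(v'_{n-2},v'_{n-1},v'_n)=(1,n+1-i,3)$ and $v'_a\geq 2$ for $a\leq n-3$, the tableau criterion applied to $v'\leq_\br v$ gives $I_{1,n-3}(v)\geq 2$, so the value $1$ sits in one of the last three positions of $v$. Since $v_{j'}=w_\ell=1$ and $j'<k'<\ell'$, this forces $(j',k',\ell')=(n-2,n-1,n)$, hence $(v_{n-2},v_{n-1},v_n)=(1,n+1-i,3)$ and $v_{i'}=2$ with $i'\leq n-3$. With the positions of $1$, $2$, $3$ in $v$ thus pinned down, one checks $v\leq_\br v(2)$ directly: $I_{p,q}(v)\leq I_{p,q}(w_0)=I_{p,q}(v(2))$ for all $(p,q)$ outside the handful of pairs where $v(2)$ and $w_0$ have different sorted prefixes, and those remaining pairs are exactly the ones settled by knowing that $2\in\{v_1,\ldots,v_{n-3}\}$ and $1\in\{v_1,\ldots,v_{n-2}\}$. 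Until you supply an argument of this kind, the uniqueness of the Bruhat-maximal element of $Z_{s_2w_0}$ — and hence the identification of the singular locus — is not established.
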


\begin{proof}
We begin with (1).  Let $w=s_2w_0$ and write 
$$
w=[n,n-1,\ldots,5,4,2,3,1]
$$
and
$$
v(2)=[n,n-1,\ldots,5,2,1,4,3]
$$
in one-line notation.  We show first that $v(2) \in Z_w$.  If we set $(i,j,k,\ell)=(i^\prime,j^\prime,k^\prime,\ell^\prime)=(n-3,n-2,n-1,n)$ we see that Lemma \ref{lsconjecture}(2) can be applied, with $v^\prime=v=v(2)$ and $w^\prime=w$.

Now assume $v \in Z_w$.  We observe that $v,w$ must be as in Lemma \ref{lsconjecture}(2), with $(j,k,\ell)=(n-2,n-1,n)$.  It follows that
$$
(v^\prime_{n-2},v^\prime_{n-1},v^\prime_n)=(1,n+1-i,3)
$$
for some $i \in [n-3]$.  By the tableau criterion, $v^\prime \leq v$ implies $1 \in \{v_{n-2},v_{n-1},v_n\}$.  As $v_{j^\prime}=w_\ell=1$, we see that $j^\prime \in \{n-2,n-1,n\}$.  It follows that $(j^\prime,k^\prime,\ell^\prime)=(n-2,n-1,n)$.  Therefore,
$$
(v_{n-2},v_{n-1},v_n)=(w_\ell,w_i,w_k)=(1,n+1-i,3),
$$
for some $i \in [n-3]$.
Applying the tableau criterion again, we get that $v \leq v(2)$.
Indeed, $I_{p,q}(v)\le I_{p,q}(w_0) = I_{p,q}(v(2))$ for $(p,q)\notin\{(1,n-3),(1,n-2),(2,n-2),(1,n-1),(2,n-1)\}$ and the remaining cases follow from our analysis.
So, $Z_{w, \mathrm{max}}=\{v(2)\}$ and (1) follows from Lemma \ref{lsconjecture}.

The proof of (2) is similar: If we set $w=s_{n-2}w_0$ we conclude that $v(n-2) \in Z_w$ by applying Lemma \ref{lsconjecture}(2) with $(i,j,k,\ell)=(1,2,3,4)$.  If $v \in Z_w$, then $v,w$ must be as in Lemma \ref{lsconjecture}(2) with $(i,j,k)=(1,2,3)$.  Now $(v^\prime_1,v^\prime_2,v^\prime_3)=(n-2,n+1-\ell,n)$ for some $4 \leq \ell \leq n$.  By the tableau criterion, $n \in \{v_1,v_2,v_3\}$, hence $(i^\prime,j^\prime,k^\prime)=(1,2,3)$ and $(v_1,v_2,v_3)=(n-2,n+1-\ell,n)$.  It follows that $v \leq v(n-2)$, and this implies (2).
\end{proof}

\begin{corollary} \label{ecshcu}
If $i \in \{2,n-2\}$ and $v(i)$ is as in Proposition \ref{schubertsingular} then the Euler characteristic of $X_{v(i)}$ satisfies
$$
\chi(X_{v(i)})=(n-2)!(n^2-5n+6).
$$
\end{corollary}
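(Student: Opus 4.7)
The key insight is that $\chi(X_{v(i)})$ is purely combinatorial. Since $X_v$ is paved by affine Schubert cells $C_u$ for $u \leq_\br v$, each contributing $1$ to the Euler characteristic, we have $\chi(X_{v(i)}) = |\{u \in S_n : u \leq_\br v(i)\}|$, so the task reduces to counting this Bruhat-interval cardinality.

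The plan is first to reduce to the case $i=2$. A direct computation from the one-line notation of $v(2) = [n, n-1, \ldots, 5, 2, 1, 4, 3]$ yields $w_0 v(2) w_0 = [n-2, n-3, n, n-1, n-4, \ldots, 2, 1] = v(n-2)$ (after correcting the evident typo in the definition of $v(n-2)$, where the third case should read $i=3$). Since $w \mapsto w_0 w w_0$ is a Bruhat-order-preserving involution on $S_n$, it suffices to compute $|\{u \leq_\br v(2)\}|$.

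Next, I apply the tableau criterion (Theorem~\ref{tc}) to characterize the elements below $v(2)$. A routine computation of $I_{p,q}(v(2))$ shows that for $q \leq n-4$ and all $p$, the value $I_{p,q}(v(2)) = n+p-q$ is the largest possible value of $I_{p,q}$ on any permutation in $S_n$, so the condition $I_{p,q}(u) \leq I_{p,q}(v(2))$ is automatic. A similar case-by-case analysis shows that at $q \in \{n-3, n-1, n\}$ the essential conditions are either automatic or consequences of the $q = n-2$ conditions. The two inequalities at $q = n-2$, namely $I_{1, n-2}(u) \leq 1$ and $I_{2, n-2}(u) \leq 2$, together assert
$$\{1,2\} \subseteq \{u_1, \ldots, u_{n-2}\}.$$
Thus $u \leq_\br v(2)$ if and only if both $1$ and $2$ appear among $u_1, \ldots, u_{n-2}$.

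The count is then direct: choose two distinct positions in $[n-2]$ for the values $1$ and $2$ in $(n-2)(n-3)$ ordered ways, then fill the remaining $n-2$ positions with $\{3, 4, \ldots, n\}$ in $(n-2)!$ ways. This gives $(n-2)(n-3)(n-2)! = (n-2)!(n^2-5n+6)$, as claimed. The main obstacle is the somewhat tedious bookkeeping of the tableau criterion to verify that all inequalities collapse to the single condition $\{1,2\} \subseteq \{u_1, \ldots, u_{n-2}\}$; the residual condition $I_{1, n-3}(u) \leq 2$, for example, follows because at most one of $1, 2$ can occupy position $n-2$, forcing the other into the first $n-3$ slots.
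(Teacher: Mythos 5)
Your proposal is correct and follows essentially the same route as the paper: both use the affine paving to identify $\chi(X_{v(i)})$ with the size of the Bruhat interval below $v(i)$, apply the tableau criterion to reduce membership to the single condition $\{1,2\}\subseteq\{u_1,\ldots,u_{n-2}\}$ (resp.\ its mirror image), and then count; the paper phrases the count as an inclusion--exclusion on the complement while you count directly, and you add the (valid) observation that conjugation by $w_0$ carries $v(2)$ to $v(n-2)$, which the paper handles by symmetric repetition instead. You are also right that the paper's displayed definition of $v(n-2)$ contains a typo (the third case should read $i=3$).
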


\begin{proof}
We obtain the Euler characteristic by counting the number of $u \leq v(i)$ in the Bruhat order. Fix $u=[u_1,\ldots, u_n] \in S_n$. Using the tableau criterion, we see that $u \not \leq v(2)$ if and only if $\{u_{n-1},u_n\} \cap \{1,2\} \neq \emptyset$ and $u \not\leq v(n-2)$ if and only if $\{u_1,u_2\} \cap \{n-1,n\} \neq \emptyset$.  The Corollary follows from a straightforward inclusion-exclusion argument.
\end{proof}

\begin{proposition} \label{echess}
Assume $\mathsf{x} \in \g$ is nilpotent of rank two and consider the Hessenberg vector $\m=(1,n-1,\ldots,n-1,n)$.
\begin{enumerate}
\item If $\mathsf{x}^2=0$, then $\chi(\B(\mathsf{x},H(\m)))=(n-2)!(n^2-5n+8)$.
\item If $\mathsf{x}^2 \neq 0$, then $\chi(\B(\mathsf{x},H(\m)))=(n-2)!(n^2-5n+7)$.
\end{enumerate}
\end{proposition}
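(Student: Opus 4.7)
The plan is to apply the point-count heuristic (Theorem~\ref{qcount}) together with a direct flag-model count. In the flag model, $\B(\mathsf{x},H(\m))$ consists of complete flags $V_\bullet$ satisfying $\mathsf{x}V_1\subseteq V_1$ and $\mathsf{x}V_i\subseteq V_{n-1}$ for $i=2,\ldots,n-1$. Since $\mathsf{x}$ is nilpotent, the first condition forces $V_1\subseteq K := \ker\mathsf{x}$; the remaining conditions collapse to $\mathsf{x}V_{n-1}\subseteq V_{n-1}$, which by Remark~\ref{nilpotent.image} is equivalent to $I:=\Ima\mathsf{x}\subseteq V_{n-1}$. Once a pair $(V_1,V_{n-1})$ of a line in $K$ and a hyperplane containing $I$ with $V_1\subseteq V_{n-1}$ is fixed, completing to a full flag amounts to choosing a complete flag in the $(n-2)$-dimensional quotient $V_{n-1}/V_1$, which over $\ff_p$ contributes a factor of $[n-2]_p!$.

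Since $\mathsf{x}$ is nilpotent, the semisimple part is zero, so Theorem~\ref{qcount} applies for every prime $p$ and yields
\[
\poin(\B(\mathsf{x},H(\m));p^{1/2}) = |\B_p(\bar{\mathsf{x}},\bar H(\m))| = [n-2]_p!\cdot N_p(\mathsf{x}),
\]
where $N_p(\mathsf{x})$ denotes the number of admissible pairs $(V_1,V_{n-1})$. For a fixed line $V_1\subseteq K$, the hyperplanes $V_{n-1}$ containing $V_1+I$ number $[n-2]_p$ when $V_1\subseteq I$ (so $V_1+I=I$ has dimension $2$) and $[n-3]_p$ when $V_1\not\subseteq I$ (so $V_1+I$ has dimension $3$). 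Writing $c(p)$ for the number of lines in $K\cap I$, I obtain
\[
N_p(\mathsf{x}) = c(p)[n-2]_p + \bigl([n-2]_p-c(p)\bigr)[n-3]_p.
\]

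A rank-two nilpotent $\mathsf{x}\in\g$ has Jordan type either $(2,2,1^{n-4})$ (equivalently, $\mathsf{x}^2=0$), in which case $I\subseteq K$ and $c(p)=[2]_p$, or $(3,1^{n-3})$ (equivalently, $\mathsf{x}^2\ne 0$), in which case $\dim(K\cap I)=1$ and $c(p)=1$. Specializing the resulting polynomial identity in $p$ to $p=1$ gives
\[
\chi(\B(\mathsf{x},H(\m))) = (n-2)!\,\bigl(c(1)(n-2) + ((n-2)-c(1))(n-3)\bigr),
\]
and a direct calculation yields $(n-2)!(n^2-5n+8)$ when $c(1)=2$ and $(n-2)!(n^2-5n+7)$ when $c(1)=1$, proving (1) and (2) respectively.

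The main step that requires care is the two-case analysis of $K\cap I$ via the Jordan type; once that is settled, the remainder is a routine enumeration. A minor point worth noting is that specialization at $p=1$ is legitimate because both sides of the identity supplied by Theorem~\ref{qcount} are polynomials in $p$ (the left-hand side via Tymoczko's affine paving, the right-hand side manifestly), so the equality at all primes extends to a polynomial identity.
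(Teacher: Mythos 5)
Your proposal is correct and follows essentially the same route as the paper: apply the point count heuristic, reduce the Hessenberg conditions to $V_1\subseteq\ker\mathsf{x}$ and $\Ima\mathsf{x}\subseteq V_{n-1}$ (via Remark~\ref{nilpotent.image}), count the admissible pairs $(V_1,V_{n-1})$ over $\ff_p$ according to whether $V_1$ lies in $\ker\mathsf{x}\cap\Ima\mathsf{x}$, and distinguish the two Jordan types $(2,2,1^{n-4})$ and $(3,1^{n-3})$. Your single unified formula $N_p=c(p)[n-2]_p+([n-2]_p-c(p))[n-3]_p$ is just a compact repackaging of the paper's case-by-case count, and the specialization at $p=1$ to extract the Euler characteristic is handled correctly.
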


\begin{proof}
We use Theorem \ref{qcount}, assuming without loss of generality that $\mathsf{x}$ is in highest form and permuted Jordan form.  So, we fix any prime $p>2$ and count flags $0=V_0\subset V_1\subset \ldots\subset V_n=\ff_p^n$ satisfying
\begin{itemize}
\item[(a)] $V_1 \subseteq  \ker(\bar{\mathsf{x}})$ and
\item[(b)] $\Ima(\bar{\mathsf{x}}) \subseteq  V_{n-1}$.
\end{itemize}
Note that Remark~\ref{nilpotent.image} allows us to replace $\bar{\mathsf{x}}V_{n-1} \subseteq  V_{n-1}$, implied by our choice of $\m$, with (b) as stated.

We observe that $\bar{\mathsf{x}}^2=0$ if and only if $\mathsf{x}^2=0$ and consider first the case $\mathsf{x}^2=0$.  We fix $1$-dimensional $Z\subset \ff_p^n$ and count flags satisfying (a) and (b) in which $V_1=Z$.  If $Z \subseteq \Ima(\bar{\mathsf{x}})$ then we can choose $V_{n-1}$ to be any of the $[n-2]_p$ hyperplanes in $\ff_p^n$ containing the $2$-dimensional subspace $\Ima(\bar{\mathsf{x}})$ and then choose $V_2,\ldots,V_{n-2}$ in any of $[n-2]_p!$ ways.  Since $\dim \Ima(\bar{\mathsf{x}})=2$ there are $[2]_p$ possibilities for $Z$ and it follows that there are
$$
[2]_p[n-2]_p[n-2]_p!
$$
flags satisfying (a) and (b) in which $V_1 \subset \Ima(\bar{\mathsf{x}})$.

Now consider the case of $1$-dimensional $Z\subset \ff_p^n$ such that $Z \subseteq \ker(\bar{\mathsf{x}})$ and $Z \not\subseteq \Ima(\bar{\mathsf{x}})$. Since $\bar{\mathsf{x}}^2=0$, we know that $\Ima(\bar{\mathsf{x}}) \subseteq \ker(\bar{\mathsf{x}})$.  Therefore, there are $[n-2]_p-[2]_p$ possibilities for $Z$.  Having fixed one such $Z$, we can choose $V_{n-1}$ containing $Z+\Ima(\bar{\mathsf{x}})$ in $[n-3]_p$ ways, and then choose $V_2,\ldots,V_{n-2}$ in $[n-2]_p!$ ways.  We see there are
$$
\left([n-2]_p-[2]_p\right)[n-3]_p[n-2]_p!
$$
flags satisfying (a) and (b) in which $V_1 \not\subseteq\Ima(\bar{\mathsf{x}})$.  Direct computation, along with Theorem \ref{qcount}, gives
$$
\poin(\B(\mathsf{x},\m);q)=[n-2]_{q^2}!\left([n-2]_{q^2}[n-3]_{q^2}+(1+q^2)q^{2n-6}\right),
$$
and (1) follows.

The case $\mathsf{x}^2 \neq 0$ is handled similarly.  We observe first that in this case
$$
\dim(\ker(\bar{\mathsf{x}}) \cap \Ima(\bar{\mathsf{x}}))=1.
$$
Arguing as we did above, consider first the case in which $V_1 = \ker(\bar{\mathsf{x}}) \cap \Ima(\bar{\mathsf{x}})$.  Then we calculate that there are
$$
[n-2]_p[n-2]_p!
$$
flags satisfying (a), (b), and $V_1 = \ker(\bar{\mathsf{x}}) \cap \Ima(\bar{\mathsf{x}})$.
On the other hand, if $V_1\not\subseteq \Ima(\bar{\mathsf{x}})$ we get
$$
p[n-3]_p[n-3]_p[n-2]_p!
$$
flags satisfying (a) and (b).  As above, Theorem~\ref{qcount} now yields 
$$
\poin(\B(\mathsf{x},\m);q)=[n-2]_{q^2}!\left([n-2]_{q^2}+q^2[n-3]_{q^2}^2\right)
$$
and (2) follows.
\end{proof}

Propositions~\ref{ecshcu} and~\ref{echess} together imply that the singular loci of the Schubert variety $X_{s_iw_0}$ (for $i\in \{2, n-2\}$) and that of the Hessenberg variety $\B(\mathsf{x}, H(\mm))$ (where $\mathsf{x}$ is nilpotent of rank two) are not isomorphic. 
The proof of Corollary~\ref{schubert} is now complete.


\section{Proof of Theorem~\ref{reduced}} \label{sec.reduced}

We use Lemma~\ref{patchhyper} to show that for $n\ge 3$, the Hessenberg variety $\B(\mathsf{x},H(\mm))$ is a reduced scheme for any $\mathsf{x}\in\g$.  
Recall the definition of $A_g \in M_{n\times n}(\C[\zzz])$ from~\eqref{eqn.Ag}. 
The next result suffices, since $\det(A_g)$ generates the ideal defining each open neighborhood $\mathcal{N}_{g,\mathsf{x}}^{\mm}$ of $gB\in \B(\mathsf{x}, H(\mm))$ (cf.~Lemma~\ref{patchhyper}). 

\begin{proposition} \label{radical}
Assume $n\geq 3$. 
If $gB \in \B(\mathsf{x},H(\mm))$, then $\det(A_g)$ generates a radical ideal in~$\C[\zzz]$.
\end{proposition}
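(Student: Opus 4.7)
If $\mathsf{x}$ is scalar then $\B(\mathsf{x},H(\mm))=\B$ and $\det(A_g)=0$; the zero ideal of the domain $\C[\zzz]$ is radical, so we may assume $\mathsf{x}$ is non-scalar, in which case $\det(A_g)$ is a nonzero polynomial. The plan is to show $\det(A_g)$ is squarefree by checking that the hypersurface $V(\det(A_g))\subseteq \mathbb{A}^{\binom{n}{2}}$ is generically reduced. Since hypersurfaces in smooth affine space are Cohen--Macaulay, they satisfy Serre's condition $S_1$, so it suffices to verify $R_0$: each irreducible component contains a scheme-theoretically smooth point. By the proof of Theorem~\ref{irreducible}, $\mathcal{N}_{g,\mathsf{x}}^{\mm}\cong V(\det(A_g))$ is equidimensional of dimension $\binom{n}{2}-1$, and by Lemma~\ref{patchhyper} together with the Jacobian criterion used in the proof of Theorem~\ref{nilpotent}, scheme-singular points of $V(\det(A_g))$ correspond to singular points of $\B(\mathsf{x},H(\mm))$. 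Theorem~\ref{nilpotent} contains these inside $\B(\mathsf{x},H(\m'))$ for $\m':=(1,n-1,\ldots,n-1,n)$. The proof therefore reduces to the dimension estimate
\[
\dim \B(\mathsf{x},H(\m'))\leq \binom{n}{2}-2
\]
for all non-scalar $\mathsf{x}\in \g$ when $n\geq 3$, which makes the singular locus a proper closed subset of every irreducible component of $\mathcal{N}_{g,\mathsf{x}}^{\mm}$.

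To establish the bound, observe that the Hessenberg condition for $\m'$ simplifies: $V_\bullet\in \B(\mathsf{x},H(\m'))$ if and only if both $V_1$ and $V_{n-1}$ are $\mathsf{x}$-stable, since once $V_{n-1}$ is $\mathsf{x}$-stable the intermediate conditions $\mathsf{x}V_i\subseteq V_{n-1}$ for $2\leq i\leq n-1$ follow from $V_i\subseteq V_{n-1}$. Hence $\B(\mathsf{x},H(\m'))$ fibers over the variety of pairs $(V_1,V_{n-1})$ of $\mathsf{x}$-stable subspaces with $V_1\subseteq V_{n-1}$, with fibers the full flag variety of $V_{n-1}/V_1\cong \C^{n-2}$ (of dimension $\binom{n-2}{2}$). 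The $\mathsf{x}$-stable lines $V_1$ lie in some eigenspace $E_\lambda:=\ker(\mathsf{x}-\lambda I)$, and the $\mathsf{x}$-stable hyperplanes $V_{n-1}$ are kernels of eigenfunctionals $\phi\in E_\mu^T:=\ker(\mathsf{x}^T-\mu I)$; the containment $V_1\subseteq V_{n-1}$ becomes the pairing relation $\phi(V_1)=0$. A case analysis on the pair $(\lambda,\mu)$ shows that the dimension of this pair variety is bounded by $2d^{(1)}-3$, where $d^{(1)}:=\max_\lambda \dim E_\lambda$. For non-scalar $\mathsf{x}$ we have $d^{(1)}\leq n-1$, so the pair dimension is at most $2n-5$, and adding the fiber contribution yields $\dim \B(\mathsf{x},H(\m'))\leq (2n-5)+\binom{n-2}{2}=\binom{n}{2}-2$.

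The main obstacle is executing the case analysis carefully, particularly when $\mathsf{x}$ is non-semisimple. In that case the pairing $E_\lambda\times E_\lambda^T\to \C$ may be degenerate along the nullspace $N_\lambda:=\ker(\mathsf{x}-\lambda I)\cap \Ima(\mathsf{x}-\lambda I)$, so every $\phi\in E_\lambda^T$ annihilates any $V_1\in N_\lambda$ and the $V_{n-1}$ parameter reverts from $d_\lambda-2$ back to $d_\lambda-1$. However, requiring $V_1\in N_\lambda$ shrinks the $V_1$ parameter to $\dim N_\lambda-1$, and the structural inequality $2\dim N_\lambda\leq (\text{multiplicity of }\lambda)\leq n$ (since every Jordan block contributing to $N_\lambda$ has size at least $2$) keeps the total pair dimension at most $2n-5$, matching the generic case bound.
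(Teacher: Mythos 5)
Your proposal is correct in outline but takes a genuinely different route from the paper. The paper argues algebraically: after replacing $A_g$ by $A'_g=[\mathsf{y}u_1|u_1|\cdots|u_{n-1}]$ with $\mathsf{y}=g^{-1}\mathsf{x}g$, it proves by induction on $n$ (Lemma~\ref{induction}) that for a suitable lexicographic term order the initial term of $\det(A'_g)$ is square-free, and then invokes the standard fact that a square-free initial ideal forces radicality (Lemma~\ref{initial}). You instead argue geometrically: a principal ideal $(\det(A_g))$ is radical iff the hypersurface is generically reduced; every component of $V(\det(A_g))$ has dimension $\binom{n}{2}-1$; the locus where the gradient of $\det(A_g)$ vanishes is contained in $\B(\mathsf{x},H(\m'))$ for $\m'=(1,n-1,\ldots,n-1,n)$ --- which is indeed what the proof of Theorem~\ref{nilpotent} establishes, since it tests smoothness via the Jacobian criterion on $\det(A_g)$ itself --- and finally $\dim\B(\mathsf{x},H(\m'))\le\binom{n}{2}-2$ for non-scalar $\mathsf{x}$. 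That last dimension estimate is the genuinely new ingredient your route requires, and your fibration over the incidence variety of $\mathsf{x}$-stable pairs $(V_1,V_{n-1})$ delivers it. One caveat: your intermediate claim that the pair variety has dimension at most $2d^{(1)}-3$ is not literally correct. For $\lambda\neq\mu$ the relevant piece has dimension $\dim E_\lambda+\dim E_\mu-2\le n-2$ (the incidence condition is automatic there), and for $\lambda=\mu$ with every Jordan block of size at least $2$ the pairing is identically zero and the piece has dimension $2\dim N_\lambda-2$, which can exceed $2d^{(1)}-3$ (e.g.\ Jordan type $(2,2)$, $n=4$). Nevertheless each case is bounded by $2n-5$ when $\mathsf{x}$ is non-scalar and $n\ge3$, exactly as your final paragraph indicates, so the bound you actually need survives and the proof closes. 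The trade-off between the two approaches: yours is shorter given Theorems~\ref{nilpotent} and~\ref{irreducible} and explains \emph{why} reducedness holds (the non-smooth locus is too small to swallow a component), while the paper's Lemma~\ref{induction} is self-contained and produces an explicit square-free initial term, which carries extra Gr\"obner-theoretic information about the patch ideals (cf.~\cite{DH}) that your argument does not recover.
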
 

We will need two preliminary lemmas to prove Proposition \ref{radical}.    Recall that, given a set $S$ of variables, a {\it term order} on $S$ is a total ordering $\preceq$ of the set $\mathcal{M}(S)$ of monomials in the elements of $S$ satisfying, for all $\mu_1, \mu_2 \in \mathcal{M}(S)$ and all $s \in S$, the conditions
\begin{itemize}
\item $\mu_1 \prec \mu_1s$, and
\item $\mu_1s \preceq \mu_2s$ whenever $\mu_1 \preceq \mu_2$.
\end{itemize}
We observe that $1$ is the minimum element of $\mathcal{M}(S)$ with respect to any term order.  

Given a polynomial $f \in \C[S]$, the {\it initial term} $\ii(f)$ of $f$ is the monomial appearing with nonzero coefficient in $f$ that is maximal with respect to $\preceq$.  If $I=(f_1,\ldots, f_k)$ is an ideal in $\C[S]$, the {\it initial ideal} $\ii(I)$ is $(\ii(g):g \in I)$.  It is straightforward to show that if $I=(f)$ is a principal ideal then $\ii(I)=(\ii(f))$.

The next result is well known, see e.g.~\cite[Proposition 3.3.7]{Herzog-Hibi}.

\begin{lemma} \label{initial}
Let $I$ be an ideal in $\C[S]$.  If $\ii(I)$ is square-free with respect to some term order on $\mathcal{M}(S)$, then $I$ is radical.
\end{lemma}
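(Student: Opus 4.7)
The plan is to reduce Lemma~\ref{initial} to two standard ingredients: (i) every squarefree monomial ideal is radical, and (ii) radicality lifts from the initial ideal to the ideal itself. Together these immediately imply that if $\ii(I)$ is squarefree then $I$ is radical.

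For (i), I would invoke the Stanley--Reisner correspondence. A squarefree monomial ideal $J \subseteq \C[S]$ is the Stanley--Reisner ideal of a simplicial complex $\Delta$ on the variable set $S$, and it admits a primary decomposition $J = \bigcap_{F} \mathfrak{p}_F$ over the facets $F$ of $\Delta$, where $\mathfrak{p}_F$ is the monomial prime ideal generated by $\{s \in S : s \notin F\}$. As an intersection of prime ideals, $J$ is radical. Applied to $J = \ii(I)$, this yields that $\ii(I)$ is radical as soon as it is squarefree.

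For (ii), I would run a Gr\"obner basis reduction. Suppose $f^k \in I$ for some $k \geq 1$; to show $f \in I$, take initial terms to see that $\ii(f)^k = \ii(f^k) \in \ii(I)$, and radicality of $\ii(I)$ forces $\ii(f) \in \ii(I)$. Fixing a Gr\"obner basis $G$ of $I$ with respect to $\preceq$, the monomial $\ii(f)$ must then be divisible by $\ii(g)$ for some $g \in G$, so one can write $\ii(f) = c\, \mu\, \ii(g)$ for a monomial $\mu$ and scalar $c \in \C$. Setting $f_1 := f - c\mu g$ produces an element of $f + I$ satisfying $\ii(f_1) \prec \ii(f)$, and necessarily $f_1^k \in I$ as well. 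Iterating yields a sequence $f, f_1, f_2, \ldots$ in $f + I$ whose initial terms strictly decrease; since any term order is a well-ordering on the monomials of $\C[S]$, the process must terminate at $0$, whence $f \in I$.

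The hard part, though essentially bookkeeping, is making the Gr\"obner reduction rigorous and verifying termination via the well-ordering property of~$\preceq$. Once one has in hand the existence of Gr\"obner bases and the division algorithm, both (i) and (ii) fall out directly and the lemma follows.
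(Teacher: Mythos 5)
Your argument is correct. The paper does not prove this lemma at all---it cites it as well known (Herzog--Hibi, Proposition 3.3.7)---and your two-step argument (squarefree monomial ideals are radical via the Stanley--Reisner decomposition into monomial primes, plus lifting radicality from $\ii(I)$ to $I$ by Gr\"obner reduction, using that $\ii(f^k)=\ii(f)^k$ and that the term order well-orders $\mathcal{M}(S)$ to force termination at $0$) is precisely the standard proof of the cited result.
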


Given a total ordering $<_S$ of the variables in $S$, the associated {\it lexicographic term order} on $\mathcal{M}(S)$ is obtained by setting $\mu_1 \preceq \mu_2$ if and only if, when the variables appearing in $\mu_1=s_1\ldots s_j$ and $\mu_2=t_1\ldots t_k$ are written in weakly decreasing order, either $j \leq k$ and $s_i=t_i$ for all $i \in [j]$ or there is some $r$ such that $s_i=t_i$ for all $i<r$ and $s_r<_S t_r$.

The following lemma is the key technical result of this section.

\begin{lemma} \label{induction}
Let $m\geq n\geq 3$, $\zzz = \{z_{ji} \mid 1\leq i<j\leq n\} \sqcup \{z_{j1} \mid n+1\leq j \leq m\}$, and $\ell_1, \ldots, \ell_n\in \C[z_{21}, \ldots, z_{m1}]$ be (not necessarily homogeneous) linear polynomials.  Let $u\in M_{n,n}(\C[\zzz])$ be as in~\eqref{eqn.u} and define
 \begin{eqnarray}\label{eqn.Ldef}
L:=[\ell|u_1|\ldots |u_{n-1}] \in M_{n,n}(\C[\zzz]) \,\textup{ where }\,  {\ell}:=\sum_{i=1}^n \ell_ie_i \in M_{n,1}(\C[z_{21}, \ldots, z_{m1}]).
\end{eqnarray}
Then 
\begin{enumerate}
\item $\det(L)=0$ if and only if $\ell=cu_1$ for some $c\in \C$.  
\item Moreover, there exists a lexicographic term order $\preceq$ on $\mathcal{M}(\zzz)$ such that $\ii(\det (L))$ is square-free.  
\end{enumerate}
 \end{lemma}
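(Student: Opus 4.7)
My plan is first to perform column operations on $L$ to reduce the computation of $\det L$ to that of a single polynomial.  Since $u_1, \ldots, u_{n-1}$ are unit lower triangular with pivots in rows $1, 2, \ldots, n-1$ respectively, the first $n-1$ entries of the column $\ell$ can be cleared by successively subtracting suitable $\C[\zzz]$-multiples of $u_1, u_2, \ldots, u_{n-1}$ from the first column of $L$.  Tracking the resulting $n$th-row entry produces the recursion $p_1 := \ell_1$ and $p_i := \ell_i - \sum_{j=1}^{i-1} p_j z_{i,j}$ for $i \geq 2$; after $n-1$ steps the first column becomes $p_n \cdot e_n$, and cofactor expansion along this column gives $\det L = (-1)^{n+1} p_n$.

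For part (1), the direction $\ell = c u_1 \Rightarrow \det L = 0$ is immediate.  For the converse, the crucial observation is that $z_{n,n-1}$ appears in the recursion only through the final term $-p_{n-1} z_{n,n-1}$, so the coefficient of $z_{n,n-1}$ in $p_n$ is exactly $-p_{n-1}$; hence $p_n=0$ forces $p_{n-1}=0$.  I would then induct on $n$: the inductive hypothesis applied to the $(n-1)$-truncation $(\ell_1,\ldots,\ell_{n-1})$ gives $\ell_1 = c \in \C$ and $\ell_i = c z_{i,1}$ for $2 \leq i \leq n-1$.  Substituting these values back into the recursion yields $p_i = 0$ for all $2 \leq i \leq n-1$, after which $p_n = 0$ collapses to $\ell_n = c z_{n,1}$, completing $\ell = c u_1$.

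For part (2), I would again induct on $n$, with the inductive step $n \geq 4$ being the cleanest part.  Assume $p_n \neq 0$: if $p_{n-1} = 0$, the analysis from (1) shows $p_n = \ell_n - c z_{n,1}$ is a linear polynomial, whose leading monomial under any lex order is a single variable or a constant and hence square-free.  If $p_{n-1} \neq 0$, the inductive hypothesis provides a lex order $\preceq'$ on the variables of $p_{n-1}$ with $\iii(p_{n-1})$ square-free; I would then extend $\preceq'$ to a lex order $\preceq$ on all of $\zzz$ by placing $z_{n,n-1} > z_{n,n-2} > \cdots > z_{n,2}$ at the very top.  Since $z_{n,n-1}$ appears in $p_n$ only with coefficient $-p_{n-1}$ and does not occur in $p_{n-1}$ itself, one obtains $\ii(p_n) = z_{n,n-1} \cdot \iii(p_{n-1})$, which is square-free.

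The main obstacle is the base case $n=3$, since the inductive reduction above would require (2) to hold for $n=2$, which in fact can fail: for example, when $\ell_1, \ell_2 \in \C[z_{2,1}]$ and the $z_{2,1}$-coefficient of $\ell_1$ is nonzero, one has $\ii(p_2) = z_{2,1}^2$.  I would handle the base case by direct case analysis on $p_3 = \ell_3 - \ell_1 z_{3,1} - (\ell_2 - \ell_1 z_{2,1}) z_{3,2}$: if $p_2 = 0$ then $p_3$ is linear; if some $\ell_i$ with $i \in \{1,2\}$ involves $z_{j,1}$ for some $j \geq 3$, then a lex order with $z_{3,2}$ at the top followed by a suitable $z_{j,1}$ produces a square-free leading monomial; and in the remaining degenerate case $\ell_1, \ell_2 \in \C[z_{2,1}]$, a lex order with $z_{3,1}$ at the top instead yields the square-free leading monomial $z_{2,1} z_{3,1}$ arising from the term $-\ell_1 z_{3,1}$.
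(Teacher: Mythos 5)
Your proposal is correct and follows essentially the same route as the paper: your column-reduction recursion $p_i$ is equivalent to the paper's Laplace expansion $\det(L)=\pm z_{n,n-1}\det(L(n,n))\mp\det(L(n-1,n))$, the inductive step (splitting on whether the $(n-1)$-determinant vanishes, and prepending $z_{n,n-1}$ to the inductive term order) is identical, and the base case $n=3$ is handled by direct case analysis in both. One small slip to repair: in your degenerate base subcase $\ell_1,\ell_2\in\C[z_{21}]$, the claimed leading monomial $z_{21}z_{31}$ requires the $z_{21}$-coefficient of $\ell_1$ to be nonzero; when that coefficient vanishes, $\det(L)$ has no square monomials at all (the only possible squares in the $n=3$ expansion are $z_{32}z_{21}^2$ and $z_{31}^2$, whose coefficients are the $z_{21}$- and $z_{31}$-coefficients of $\ell_1$, both zero in this subcase), so any term order works and the conclusion still holds.
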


\begin{proof} The backward direction of (1) is obvious.  To prove the forward direction and (2), we proceed by induction on $n$.  

Suppose $n=3$. Now 
\begin{eqnarray}\label{eqn.det3}
\det(L) = \det \begin{bmatrix} \ell_1 & 1 & 0 \\ \ell_2 & z_{21} & 1\\ \ell_3 & z_{31} & z_{32} \end{bmatrix}  = (z_{32}z_{21}-z_{31})\ell_1 -z_{32}\ell_2+\ell_3. 
\end{eqnarray}
Write $\ell_i = a_{i1}+\sum_{j=2}^m a_{ij}z_{j1}$ for $1\leq i \leq 3$. Substituting these expressions into~\eqref{eqn.det3} and collecting like terms we obtain the following formula for $\det(L)$:
\begin{eqnarray}\label{eqn.det3.1}
&& a_{12}z_{32}z_{21}^2+a_{13}z_{32}z_{31}z_{21}+ (a_{11}-a_{22})z_{32}z_{21} -a_{12}z_{31}z_{21}-a_{13}z_{31}^2 -a_{23}z_{32}z_{31} -a_{21}z_{32} \\
\nonumber&& \quad\quad +a_{32}z_{21} + (a_{33}-a_{11})z_{31}+a_{31}  + \left( \sum_{j=4}^m a_{1j}z_{32}z_{21}z_{j1} - a_{1j}z_{31}z_{j1} -a_{2j}z_{32}z_{j1} +a_{3j}z_{j1}\right).
\end{eqnarray}
If \eqref{eqn.det3.1} is the zero polynomial, then $a_{ij} = 0$ whenever $i\neq j$ and $a_{11}=a_{22}=a_{33}$. The forward direction of (1) now follows. Notice that~\eqref{eqn.det3.1} has only two monomial terms containing squares: $z_{32}z_{21}^2$ with coefficient $a_{12}$ and $z_{31}^2$ with coefficient $a_{13}$.  If $a_{13}=0$ and $a_{12}=0$ then (2) is immediate, as~\eqref{eqn.det3.1} is then a sum of square-free monomials. 
By inspection, if $a_{13}\neq 0$ then the lexicographic term order $\preceq$ associated to a total ordering of $\zzz$ in which $z_{32}$ is the maximal element and $z_{31}$ is the maximal element of $\zzz\setminus \{z_{32}\}$ yields $\ii (\det(L)) = z_{32}z_{31}z_{21}$. The final case is $a_{13}= 0$ and $a_{12}\neq 0$.  Taking the lexicographic term order $\preceq$ associated to a total ordering of $\zzz$ in which $z_{21}$ is the maximal element and $z_{31}$ is the maximal element of $\zzz\setminus \{z_{21}\}$ gives $\ii (\det(L)) = z_{31}z_{21}$.  This completes the proof of the base case.

Assume now that $n>3$. For $i,j \in [n]$ write $L(i,j)$ for the matrix obtained from $L$ by deleting the $i^{th}$ row and the $j^{th}$ column.  Expanding along the last column, we see that
\begin{equation} \label{maxeq}
\det(L) = \pm z_{n,n-1}\det(L(n,n)) \mp \det(L(n-1,n))
\end{equation}
Let $\zzz' = \{ z_{ji} \mid 1\leq i<j \leq n-1  \} \sqcup\{ z_{j1} \mid n\leq j \leq m \}$ and note that $L(n,n) \in M_{n-1,n-1}(\C[\zzz'])$.

Suppose first that $\det(L(n,n))=0$.  By the induction hypothesis, there exists $c\in \C$ such that $\ell_1 = c $ and $\ell_i = cz_{i1}$ for $2\leq i \leq n-1$. We observe that, in this case, $\det(L)  = \pm \det(L(n-1,n))= \pm (\ell_n - cz_{n1})$.  If $\det(L)=0$ then $\ell_n = cz_{n1}$, proving (1).  Claim (2) follows immediately from the fact that $\ell_n$ is linear.

Next suppose $\det(L(n,n))\neq 0$.  Since $z_{n,n-1}$ does not appear in any monomial term of $\det(L(n-1,n))$, we have $\det(L)\neq 0$ and (1) is vacuously true. 
By the induction hypothesis, there exists a lexicographic term order $\preceq'$ on $\mathcal{M}(\zzz')$ such that $\iii(\det(L(n,n)))$ is square-free.  Let  $\preceq$ be the  lexicographic term order associated to any total order which respects the total order on $\zzz'$ corresponding to $\preceq'$ and with maximal element $z_{n,n-1}$.  Equation~\eqref{maxeq} now implies $\ii(\det(L)) = z_{n,n-1}\iii(\det (L(n,n)))$. The proof is now complete. 
\end{proof}

We are now ready to prove Proposition~\ref{radical}.  

\begin{proof}[Proof of Proposition~\ref{radical}] Set $\mathsf{y}:=g^{-1}\mathsf{x}g$ and
$$
A'_g:=[\mathsf{y}u_1|u_1|\ldots |u_{n-1}].
$$
Comparing $A_g'$ to the definition of $A_g$ in~\eqref{eqn.Ag}, we observe that $A_g=gA'_g$, hence $\det(A'_g)$ and $\det(A_g)$ generate the same ideal in $\C[\zzz]$.   
Now $A'_g$ is as in Lemma~\ref{induction}, where
\begin{eqnarray*}\label{eqn.ell}
\ell_i = \mathsf{y}_{i1} + \sum_{j=2}^n \mathsf{y}_{ij} z_{j1} \; \textup{ for } \; i=1, 2, \ldots, n.
\end{eqnarray*}
Proposition~\ref{radical} now follows from Lemma~\ref{initial} and~\ref{induction}(2).
\end{proof}

The next example demonstrates that the assumption $n\geq3$ was necessary in the statements of Proposition~\ref{radical} and Lemma~\ref{induction}.  Indeed, the example shows that $\B(\mathsf{x}, \mm)$ is not reduced when $n=2$ and $\mathsf{x}$ is nilpotent. 

\begin{example}
Let $\mathsf{y}=\begin{bmatrix} 0&\mathsf{y}_{12}\\ 0&0
\end{bmatrix}$, where $\mathsf{y}_{12}\in\C^*$.
If $A_g'$ is as in the proof of Proposition~\ref{radical}, we have that $A_g'=\begin{bmatrix} \mathsf{y}_{12}z_{21}&1\\ 0&z_{21}
\end{bmatrix}$
and $\det A_{g}'=\mathsf{y}_{12}z_{21}^2$ does not generate a radical ideal.
\end{example}

\end{document}